\newif\ifAMS
\AMStrue\usepackage{amssymb}}{}
\newtheorem{thm}{Theorem}[section]
\newtheorem{cor}[thm]{Corollary}
\newtheorem{prop}{Proposition}[section]
\newtheorem{lem}[thm]{Lemma}
\newtheorem{rem}{Remark}[section]
\theoremstyle{definition}
\newtheorem{defn}{Definition}[section]
\numberwithin{equation}{section}
\begin{document}

\tolerance=10000

\bigskip
\bigskip
\bigskip
\bigskip
\bigskip

\title{Bridgeland Stability Conditions \\ 
on (twisted) Kummer surfaces}

\bigskip
\bigskip
\bigskip
\bigskip

\author{Magnus Engenhorst\footnote{magnus.engenhorst@math.uni-freiburg.de}\\
Mathematical Institute, University of Freiburg\\
Eckerstrasse 1, 79104 Freiburg, Germany\\[5mm]}

\maketitle

\begin{abstract}
We construct a topological embedding of the maximal connected component of Bridgeland stability conditions of a (twisted) Abelian surface into the distinguished connected component of the stability manifold of the associated (twisted) Kummer surface. We use methods developed for orbifold conformal field theories.  
\end{abstract}

\bigskip

\setcounter{equation}{0}

\section{Introduction}

Mirror symmetry and Bridgeland's stability conditions are mathematical theories motivated by superconformal field theories (SCFT) associated to Calabi-Yau varieties. It is a non-trivial task to give the geometric interpretation of a SCFT a rigorous meaning. This is understood in the case of complex tori ~\cite{5,71} and progress has been made for certain K3 surfaces \cite{6} using realizations by non-linear $\sigma$ models. The case of Calabi-Yau threefolds turned out to be much harder: In fact, up to now there is no example of a stability condition. At least there is a concrete conjecture ~\cite{3}. There are also results for SCFTs on Borcea-Voisin threefolds \cite{18,9}. It would be interesting to study the question of quantum corrections of the central charge in an example. \\

We are interested in the case of (projective) Kummer surfaces. The aim is to lift results of ~\cite{6,7} for the associated SCFT to the space of stability conditions. In section 6 we construct a topological embedding of the unique maximal connected component $Stab^{\dagger}(A)$ of Bridgeland stability conditions of an Abelian surface A into the distinguished connected component $Stab^{\dagger}(X)$ of the stability manifold of the associated projective Kummer surface X (Theorem 5.7). We show that the group of deck transformations of $Stab^{\dagger}(A)$ (generated by the double shift) is isomorphic to a subgroup of the group of deck transformations of $Stab^{\dagger}(X)$ (Proposition 5.7).\\

This work is based on results for the embedding of the moduli space of SCFTs on complex tori into the moduli space of SCFTs on the associated Kummer surfaces given in ~\cite{6,7}. Crucial for this paper is the observation confirmed in these works that there are no ill-defined SCFTs coming from the complex torus. Rephrased in mathematical terms this is corollary 3.4. The important role of ill-defined SCFTs was rederived by Bridgeland in \cite{90}. For this issue see also \cite{110}. The mentioned embedding also holds true for twisted surfaces that include in their geometrical data a rational B-field $B\in H^{2}(X,\mathbb{Q})$ (or Brauer class). Daniel Huybrechts used generalized Calabi-Yau structures \cite{10} to describe moduli spaces of N=(2,2) SCFTs as moduli spaces of generalized Calabi-Yau structures in \cite{20}. In the case of a Kummer surface we have a canonical B-field for the orbifold conformal field theory ~\cite{6} that is compatible with the generalized Calabi-Yau structures. \\             

The paper is organised as follows:\\ 

In section 2 we review facts about the moduli space of superconformal field theories for complex tori and K3 surfacs. In Section 3 we discuss the results of \cite{6,7} for orbifold conformal field theories on Kummer surfaces. As explained above we use these results in section 5.  Generalized Calabi-Yau structures serve as geometric counterpart of SCFTs with B-fields. We introduce this notion in section 4. The results of this paper can be found in section 5. There we review the result of Bridgeland that a component of the space of stability conditions for algebraic K3 surfaces is a covering space of a subspace of the complexified even cohomology lattice. We use this result and the results from section 3 to study stability conditions in the distinguished connected component of the stability manifold of projective (twisted) Kummer surfaces induced from the Abelian surfaces.

\section{Moduli spaces of superconformal field theories}

In this section we discuss the moduli space of N=(4,4) SCFTs with central charge $c=6$. We follow in this section the version of \cite{6,7}. For a pedagogical introduction see \cite{666}. Let X be a two-dimensional Calabi-Yau manifold, i.e. a complex tori or a K3 surface. We have a pairing induced by the intersection product on the even cohomology $H^{even}(X,\mathbb{R})\cong \mathbb{R}^{4,4+\delta}$. We choose a marking, that is an isometry $H^{even}(X,\mathbb{Z})\cong L$ where L is the unique even unimodular lattice $\mathbb{Z}^{4,4+\delta}$ with $\delta=0$ for a complex torus and $\delta=16$ for a K3 surface. In the latter case this is of course just the K3 lattice $4U\oplus 2(-E_{8})$. The moduli space of SCFTs associated to complex tori or K3 surfaces are given by the following

\begin{thm} \cite{92}
Every connected component of the moduli space of SCFTs associated to Calabi-Yau 2-folds is either of the form $\mathcal{M}_{tori}=\mathcal{M}^{0}$ or $\mathcal{M}_{K3}=\mathcal{M}^{16}$ where:
\begin{eqnarray}
\mathcal{M}^{\delta}\cong O^{+}(4,4+\delta;\mathbb{Z})\backslash O^{+}(4,4+\delta;\mathbb{R})/SO(4)\times O(4+\delta). \nonumber
\end{eqnarray}
\end{thm}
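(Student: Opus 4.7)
The plan is to construct a concrete geometric model for the moduli space and identify it with the claimed double coset. The starting point is the observation (standard for $N=(4,4)$ SCFTs with $c=6$) that every such theory determines, via its left- and right-moving $\mathcal{N}{=}4$ superconformal algebras and their spectral flow operators, a positive-definite oriented four-plane $\Pi\subset H^{even}(X,\mathbb{R})$ inside a real quadratic space of signature $(4,4+\delta)$. Concretely, $\Pi$ is spanned by the real and imaginary parts of two generalized Calabi--Yau classes (encoding complex structure, complexified K\"ahler class and $B$-field), and the induced pairing on $H^{even}(X,\mathbb{R})$ agrees, after the choice of marking $H^{even}(X,\mathbb{Z})\cong L$, with the Mukai form on the unique even unimodular lattice $\mathbb{Z}^{4,4+\delta}$.

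First I would fix such a marking and show that the assignment SCFT $\mapsto \Pi$ defines a bijection between the \emph{marked} moduli space and the Grassmannian of positive-definite oriented four-planes in $L\otimes\mathbb{R}$. One direction (``a SCFT produces $\Pi$'') is algebraic and follows from the representation theory of the small $\mathcal{N}{=}4$ algebra together with the requirement that the charge lattice of BPS states be $L$. The reconstruction direction uses the geometric interpretation via non-linear $\sigma$-models on complex tori or K3 surfaces; this is where the dichotomy $\delta\in\{0,16\}$ enters, since for 2-dimensional Calabi--Yaus the only possibilities are $b_2=6$ or $b_2=22$.

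Next I would identify the Grassmannian with a homogeneous space. Witt's theorem gives transitivity of $O^+(4,4+\delta;\mathbb{R})$ on positive-definite oriented four-planes, and the stabilizer of a fixed $\Pi_0$ is exactly $SO(4)\times O(4+\delta)$ (the connected rotations of $\Pi_0$ paired with arbitrary orthogonal transformations of its negative-definite complement). This yields
\begin{equation*}
\{\text{marked SCFTs of type }\delta\}\;\cong\;O^+(4,4+\delta;\mathbb{R})/\bigl(SO(4)\times O(4+\delta)\bigr).
\end{equation*}
Finally, changing the marking by an element of the lattice automorphism group acts by left multiplication, and only orientation-preserving isometries yield genuinely equivalent SCFTs; quotienting on the left by $O^+(4,4+\delta;\mathbb{Z})$ then produces $\mathcal{M}^{\delta}$.

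The main obstacle, in my view, is not the homogeneous-space computation (which is essentially linear algebra) but verifying that the map SCFT $\mapsto \Pi$ is genuinely a bijection onto the full Grassmannian, i.e.\ that no positive four-plane is obstructed and that no component of the SCFT moduli space escapes this description. Establishing surjectivity requires invoking the classification of $\sigma$-model constructions on tori and K3 (and controlling their deformations), while injectivity requires a uniqueness statement for the $\mathcal{N}{=}4$ structure reconstructed from $\Pi$. This is where the work of~\cite{92} (and its physical inputs on unitarity and modular invariance) does the heavy lifting, and it is the step I would rely on rather than reprove.
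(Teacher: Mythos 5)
The paper does not prove this statement: it is quoted verbatim from Aspinwall--Morrison \cite{92} and used as a black box, so there is no internal argument to compare yours against. Your sketch follows the standard route of that reference --- SCFT $\mapsto$ positive-definite oriented four-plane via the $\mathcal{N}=4$ structure and spectral flow, identification of the Grassmannian $\tilde{\mathcal{M}}^{\delta}$ with $O^{+}(4,4+\delta;\mathbb{R})/\bigl(SO(4)\times O(4+\delta)\bigr)$ by Witt transitivity plus the stabilizer computation, and the left quotient by $O^{+}(4,4+\delta;\mathbb{Z})$ for the change of marking --- and you correctly locate the genuinely hard, physics-laden step (injectivity and surjectivity of the four-plane map) as the part one must take from \cite{92} rather than reprove. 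That assessment matches how the present paper treats the result.

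One point in your proposal is actively in tension with the paper and should be corrected: you assert that surjectivity amounts to checking that ``no positive four-plane is obstructed.'' For $\delta=16$ this is false in the naive sense, and the paper makes a point of it immediately after the theorem: a four-plane $x$ with a class $\delta\in H^{even}(X,\mathbb{Z})$, $\delta\perp x$, $\langle\delta,\delta\rangle=-2$ corresponds to an \emph{ill-defined} conformal field theory (extra massless states), so the double coset $\mathcal{M}^{16}$ is really a completion of the honest moduli space rather than a space every point of which carries a well-defined SCFT. This is not a pedantic caveat here --- the entire strategy of the paper (Corollary \ref{korollar} and Proposition \ref{proposition}) rests on showing that four-planes induced from the torus avoid exactly this locus. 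You should either restrict your surjectivity claim to the complement of the $(-2)$-orthogonal locus or state explicitly that the theorem's ``moduli space'' includes these degenerate points by convention. With that emendation your outline is the right one.
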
    

Points $x\in \tilde{\mathcal{M}}^{\delta}$ in the Grassmannian 
\begin{eqnarray}
\tilde{\mathcal{M}}^{\delta}=O^{+}(4,4+\delta;\mathbb{R})/SO(4)\times O(4+\delta) \nonumber
\end{eqnarray}
correspond to positive definite oriented four-planes in $\mathbb{R}^{4,4+\delta}$ whose position is given by its relative position to the reference lattice L.\\

Let us choose a marking $H^{2}(X,\mathbb{Z})\cong \mathbb{Z}^{3,3+\delta}$. The Torelli theorem \cite{18001, 28} then tells us that complex structures on two-dimensional complex tori or K3 surfaces X are in 1:1 correspondence with positive definite oriented two-planes $\Omega\subset H^{2}(X,\mathbb{Z})\otimes{R}\cong \mathbb{R}^{3,3+\delta}$ that are specified by its relative position to $\mathbb{Z}^{3,3+\delta}$. 

\begin{defn}
Let $x\subset H^{even}(X,\mathbb{Z})\otimes \mathbb{R}$ be a positive oriented four-plane specifying a SCFT on X. A \textit{geometric interpretation of this SCFT} is a choice of null vectors $\upsilon^{0}, \upsilon\in H^{even}(X,\mathbb{Z})$ along with a decomposition of x into two perpendicular oriented two-planes $x=\Omega\bot \mho$ such that $\left\langle \upsilon^{0},\upsilon^{0}\right\rangle=\left\langle \upsilon,\upsilon\right\rangle=0$, $\left\langle \upsilon^{0},\upsilon\right\rangle=1$, and $\Omega\bot \upsilon^{0},\upsilon$. 
\end{defn}

\begin{lem}\cite{92}
\label{am}
Let $x\subset H^{even}(X,\mathbb{Z})\otimes \mathbb{R}$ be a positive definite oriented four-plane with geometric interpretation $\upsilon^{0}, \upsilon\in H^{even}(X,\mathbb{Z})$, where $\upsilon^{0}, \upsilon$ are interpreted as generators of $H^{0}(X,\mathbb{Z})$ and $H^{4}(X,\mathbb{Z})$, respectively, and a decomposition $x=\Omega\bot \mho$. Then one finds an unique $\omega\in H^{even}(X,\mathbb{Z})\otimes \mathbb{R}$ and $B\in H^{even}(X,\mathbb{Z})\otimes \mathbb{R}$ with
\begin{eqnarray}
\mho=\mathbb{R}\left\langle \omega-\left\langle B, \omega\right\rangle\upsilon, \xi_{4}=\upsilon^{0}+B+\left(V-\frac{1}{2}\left\langle B,B\right\rangle\right)\upsilon\right\rangle
\end{eqnarray}
\end{lem}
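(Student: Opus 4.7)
The plan is to exploit the direct-sum decomposition $H^{even}(X,\mathbb{R}) = \mathbb{R}\upsilon^{0} \oplus H^{2}(X,\mathbb{R}) \oplus \mathbb{R}\upsilon$ furnished by the geometric interpretation---where $H^{2}(X,\mathbb{R})$ is identified with the orthogonal complement of the hyperbolic plane $\mathbb{R}\langle \upsilon^{0},\upsilon\rangle$---and then read off $\omega$, $B$, and $V$ by projecting $\mho$ onto these summands.

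First I would check that $\mho$ is not contained in $\upsilon^{\perp}$, so that $\mho\cap\upsilon^{\perp}$ is a line. Otherwise $\upsilon$ would be perpendicular to both $\mho$ and (by hypothesis) $\Omega$, hence to all of $x = \Omega\perp\mho$. Since $x$ is a maximal positive-definite $4$-plane in the signature $(4,4+\delta)$ space, its orthogonal complement is negative definite, contradicting $\upsilon \neq 0$ being isotropic. Next I would take $\xi_{4}\in \mho$ to be the unique vector satisfying $\langle \xi_{4},\upsilon\rangle = 1$ and $\xi_{4}\perp(\mho\cap\upsilon^{\perp})$, and expand it via the splitting as $\xi_{4} = \upsilon^{0} + B + c\,\upsilon$ with $B\in H^{2}(X,\mathbb{R})$ and $c\in\mathbb{R}$ uniquely determined. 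A direct pairing computation gives $\langle \xi_{4},\xi_{4}\rangle = 2c + \langle B,B\rangle$, so defining $V := \tfrac{1}{2}\langle \xi_{4},\xi_{4}\rangle$ forces $c = V - \tfrac{1}{2}\langle B,B\rangle$, reproducing the stated form of $\xi_{4}$; positivity $V > 0$ follows because $\xi_{4}$ lies in the positive-definite $4$-plane $x$. For the other basis vector I would pick a nonzero $\xi_{3}\in \mho\cap\upsilon^{\perp}$ and write $\xi_{3} = \omega + \gamma\,\upsilon$ with $\omega\in H^{2}(X,\mathbb{R})$. The orthogonality $\xi_{3}\perp\xi_{4}$ collapses to the single linear equation $\gamma + \langle \omega, B\rangle = 0$, which rewrites $\xi_{3}$ as $\omega - \langle B,\omega\rangle\upsilon$, and then $\mho = \mathbb{R}\langle \xi_{3},\xi_{4}\rangle$ by construction.

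The one point requiring care is the precise sense of uniqueness: $B$ and $V$ are determined unambiguously by $\mho$ and the choice of $\upsilon^{0},\upsilon$, but $\omega$ is only pinned down up to a nonzero scalar, namely up to a choice of generator of the line $\mho\cap\upsilon^{\perp}$. A normalisation such as $\langle \omega,\omega\rangle = 2V$ together with an orientation condition matching that of $\mho$ fixes $\omega$ up to sign, and I expect this to be the intended content of the uniqueness clause. Apart from this bookkeeping about scalars I do not anticipate any serious obstacle---the argument is essentially linear algebra relative to the splitting above, with the positive-definiteness of $x$ entering only to ensure $V > 0$ and to rule out the degenerate case $\mho \subset \upsilon^{\perp}$.
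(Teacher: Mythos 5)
The paper does not prove this lemma at all --- it is quoted from Aspinwall--Morrison \cite{92} as a known fact --- so there is no in-text argument to compare against. Your proof is correct and is exactly the standard one: splitting off the hyperbolic plane $\mathbb{R}\langle \upsilon^{0},\upsilon\rangle$, using positive-definiteness of $x$ to rule out $\mho\subset\upsilon^{\perp}$, and reading off $B$, $V$, $\omega$ from the two distinguished lines of $\mho$; your closing remark on uniqueness also matches the paper's own gloss that $B$ and $V$ are determined uniquely while $\omega$ is fixed only up to scaling.
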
 
with $\omega, B\in H^{2}(X,\mathbb{R}):=H^{even}(X,\mathbb{R})\cap \upsilon^{\perp}\cap(\upsilon^{0})^{\perp}$ ,$V\in\mathbb{R}_{+}$ and $\omega^{2}\in\mathbb{R}_{+}$. B and V are determined uniquely and $\omega$ is unique up to scaling.\\ 

The picture is that a SCFT associated to a Calabi-Yau 2-fold can be realized by a non-linear $\sigma$ model. It is important to note that the mentioned moduli space of SCFTs associated to K3 surfaces also contains ill-defined conformal field theories. Namely, a positive definite oriented four-plane $x\in \tilde{\mathcal{M}}^{16}$ corresponds to such a theory if and only if there is a class $\delta\in H^{even}(X,\mathbb{Z})$ with $\delta\bot x$ and $\left\langle \delta,\delta\right\rangle=-2$. String theory tells us that the field theory gets extra massless particles at these points in the moduli space and breaks down. For physical details see \cite{923}. For complex tori there are no such ill-defined SCFTs.

\section{Orbifold conformal field theories on K3}
\label{chapterdrei}

We are interested in SCFTs with geometric interpretations on Kummer surfaces coming from orbifolding of SCFTs on complex tori since later we want to induce stability conditions on projective Kummer surfaces from the associated Abelian surfaces. \\

We consider a complex torus T with the standard $G=\mathbb{Z}_{2}$ action and its associated Kummer surface X. We have a minimal resolution of the sixteen singularities:
\begin{eqnarray}
X:=\widetilde{T/G}\longrightarrow T/G. \nonumber
\end{eqnarray}
This resolution introduces 16 rational two-cycles which we label by $\mathbb{F}_{2}^{4}$ and we denote their Poincar\'e duals by $E_{i}$ with $i\in \mathbb{F}_{2}^{4}$. The Kummer lattice $\Pi$ is the smallest primitive sublattice of the Picard lattice Pic(X)=NS(X) containing $\left\{E_{i}|i\in \mathbb{F}_{2}^{4}\right\}$. It is spanned by $\left\{E_{i}|i\in \mathbb{F}_{2}^{4}\right\}$ and $\left\{1/2\sum_{i\in H}E_{i}|H\subset \mathbb{F}^{4}_{2} \text{ a hyperplane}\right\}$ \cite{60}. (For a review see e.g. \cite{930}). We want to find an injective map from the moduli space of SCFTs on a two-dimensional complex torus T to the moduli space of SCFTs on the corresponding Kummer surface X. This was done by Nahm and Wendland \cite{6,7} generalizing results of Nikulin \cite{60}: \\           

Let $\pi:T\rightarrow X$ be the induced rational map of degree 2 defined outside the fixed points of the $\mathbb{Z}_{2}$ action. The induced map on the cohomology gives an embedding $\pi_{*}:H^{2}(T,\mathbb{Z})(2)\hookrightarrow H^{2}(X,\mathbb{Z})$ \cite{60, 50}.\footnote{Here and in the following L(2) means a lattice L with quadratic form scaled by 2.} We define $K:=\pi_{*}H^{2}(T,\mathbb{Z})$. The lattice K obeys $K\oplus \Pi\subset H^{2}(X,\mathbb{Z})\subset K^{*}\oplus \Pi^{*}$ where $K\oplus \Pi\subset H^{2}(X,\mathbb{Z})$ is a primitive sublattice with the same rank as $H^{2}(X,\mathbb{Z})$. $H^{2}(X,\mathbb{Z})$ is even and unimodular. Let $\mu_{1},\ldots,\mu_{4}$ denote generators of $H^{1}(T,\mathbb{Z})$. This embedding defines the isomorphism

\begin{eqnarray}
\label{niku}
\gamma:K^{*}/K&\longrightarrow &\Pi^{*}/\Pi \\
\frac{1}{2}\pi_{*}(\mu_{j}\wedge\mu_{k}) & \longmapsto & \frac{1}{2}\sum_{i\in P_{jk}}E_{i} \nonumber
\end{eqnarray}                 
where $P_{jk}=\left\lbrace a=(a_{1},a_{2},a_{3},a_{4})\in\mathbb{F}_{2}^{4}|a_{l}=0, \forall l\neq j,k\right\rbrace$ with $j,k\in\left\lbrace 1,2,3,4\right\rbrace $. Conversely, with this isomorphism we can describe the lattice $H^{2}(X,\mathbb{Z})$ using  

\begin{thm}\cite{940,941}
\label{nikulin}
Let $\Lambda$ $\subset$ $\Gamma$ be a primitive, non-degenerate sublattice of an even, unimodular lattice $\Gamma$ and its dual $\Lambda^{*}$, with $\Lambda\hookrightarrow\Lambda^{*}$ given by the  form on $\Lambda$. Then the embedding $\Lambda\hookrightarrow\Gamma$ with $\Lambda^{\bot}\cap\Gamma\cong V$ is specified by an isomorphism $\gamma:\Lambda^{*}/\Lambda\rightarrow V^{*}/V$ such that the induced quadratic forms obey $q_{\Lambda}=-q_{V}\circ\gamma$. Moreover,
\begin{eqnarray}
\Gamma\cong\left\{(\lambda, v)\in \Lambda^{*}\oplus V^{*}|\gamma(\bar{\lambda})=\bar{v}\right\}.
\end{eqnarray} 
\end{thm}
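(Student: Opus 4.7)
The plan is to realize $\Gamma$ as an overlattice of $\Lambda \oplus V$ sitting inside the discriminant, and extract $\gamma$ from the ``glue.''

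First I would establish the chain of finite-index inclusions
\begin{equation*}
\Lambda \oplus V \;\subset\; \Gamma \;\subset\; \Lambda^{*} \oplus V^{*}.
\end{equation*}
The left inclusion is clear since $\Lambda$ and $V$ are orthogonal non-degenerate sublattices of equal total rank to $\Gamma$. For the right inclusion, given $x \in \Gamma$, the pairings $\langle x, \cdot\rangle$ restricted to $\Lambda$ and $V$ define elements $p_{\Lambda}(x) \in \Lambda^{*}$ and $p_{V}(x) \in V^{*}$, and $x = p_{\Lambda}(x) + p_{V}(x)$ in $\Gamma \otimes \mathbb{Q}$ by primitivity/non-degeneracy. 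This gives an embedding $\Gamma \hookrightarrow \Lambda^{*} \oplus V^{*}$.

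Next I would identify the images $\Lambda' := p_{\Lambda}(\Gamma) \subset \Lambda^{*}$ and $V' := p_{V}(\Gamma) \subset V^{*}$. The kernel of $p_{\Lambda}|_{\Gamma}$ is exactly $\Gamma \cap \Lambda^{\perp} = V$ (and symmetrically), so
\begin{equation*}
[\Gamma : \Lambda \oplus V] \;=\; |\Lambda'/\Lambda| \;=\; |V'/V|.
\end{equation*}
Combining this with the determinant relation $[\Gamma:\Lambda \oplus V]^{2} \cdot \det(\Gamma) = |\Lambda^{*}/\Lambda| \cdot |V^{*}/V|$ and using $\det(\Gamma)=1$ gives $|\Lambda'/\Lambda|\cdot|V'/V| = |\Lambda^{*}/\Lambda|\cdot|V^{*}/V|$, which, combined with $\Lambda' \subset \Lambda^{*}$ and $V' \subset V^{*}$, forces $\Lambda' = \Lambda^{*}$ and $V' = V^{*}$ (and moreover $|\Lambda^{*}/\Lambda| = |V^{*}/V|$). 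This surjectivity step is, I expect, the main obstacle: it is where unimodularity of $\Gamma$ is essentially used, and it is what makes $\gamma$ an isomorphism of the full discriminant groups rather than of mere subgroups.

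With surjectivity in hand, define $\gamma: \Lambda^{*}/\Lambda \to V^{*}/V$ by: for $\bar\lambda \in \Lambda^{*}/\Lambda$, pick $x \in \Gamma$ with $p_{\Lambda}(x) = \lambda$ and set $\gamma(\bar\lambda) := \overline{p_{V}(x)}$. Two lifts $x, x'$ differ by an element of $V$ (since $p_{\Lambda}$ has kernel $V$ in $\Gamma$), so $\gamma$ is well-defined; changing $\lambda$ by $\mu \in \Lambda \subset \Gamma$ adjusts $x$ by $\mu$ without affecting $p_{V}(x)$. Bijectivity follows from the symmetric roles of $\Lambda$ and $V$. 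The quadratic form identity $q_{\Lambda} = -q_{V} \circ \gamma$ then drops out of evenness of $\Gamma$: for $x = \lambda + v \in \Gamma$ with $\lambda \in \Lambda^{*}$, $v \in V^{*}$, orthogonality of $\Lambda$ and $V$ yields $\langle x,x\rangle = \langle \lambda,\lambda\rangle + \langle v,v\rangle \in 2\mathbb{Z}$, i.e.\ $q_{\Lambda}(\bar\lambda) + q_{V}(\bar v) = 0$ in $\mathbb{Q}/2\mathbb{Z}$.

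Finally I would prove the explicit description of $\Gamma$ by a counting argument. Set $S := \{(\lambda,v) \in \Lambda^{*} \oplus V^{*} \mid \gamma(\bar\lambda) = \bar v\}$. We have shown $\Gamma \subset S$. On the other hand, $S/(\Lambda\oplus V)$ is the graph of $\gamma$ inside $(\Lambda^{*}/\Lambda)\oplus(V^{*}/V)$, so $|S/(\Lambda\oplus V)| = |\Lambda^{*}/\Lambda|$, while from the index formula and $|\Lambda^{*}/\Lambda|=|V^{*}/V|$ we get $[\Gamma:\Lambda\oplus V] = |\Lambda^{*}/\Lambda|$ as well. The inclusion $\Gamma \subset S$ of finite-index subgroups with equal index forces $\Gamma = S$, completing the proof.
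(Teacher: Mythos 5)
The paper does not prove this statement at all --- it is quoted from Nikulin \cite{940,941} without argument --- so there is no internal proof to compare against. Your proposal is the standard (and essentially Nikulin's original) argument: realize $\Gamma$ as an overlattice of $\Lambda\oplus V$ inside $\Lambda^{*}\oplus V^{*}$, use unimodularity via $[\Gamma:\Lambda\oplus V]^{2}\,|{\det\Gamma}| = |\Lambda^{*}/\Lambda|\cdot|V^{*}/V|$ to force the projections to surject onto the full dual lattices, read off $\gamma$ as the glue map, and get the sign relation on quadratic forms from evenness. The argument is correct and complete; the only places worth flagging are that the identification $\ker(p_{V}|_{\Gamma})=\Lambda$ is exactly where primitivity of $\Lambda$ is used (you invoke it only in passing), and that the determinant formula should be read with absolute values of discriminants.
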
     
Here $\bar{l}$ is the projection of $l\in L^{*}$ onto $L^{*}/L$. We find in our case
\begin{eqnarray}
H^{2}(X,\mathbb{Z})\cong\left\{(\kappa,\pi)\in K^{*}\oplus \Pi^{*}|\gamma(\bar{\kappa})=\bar{\pi}\right\}. \nonumber
\end{eqnarray}
Hence $H^{2}(X,\mathbb{Z})$ is generated by
\begin{enumerate}
\item $\pi_{*}H^{2}(T,\mathbb{Z})\cong H^{2}(T,\mathbb{Z})(2)$,
\item the elements of the Kummer lattice $\Pi$,
\item and forms of the form $\frac{1}{2}\pi_{*}(\mu_{j}\wedge\mu_{k})+\frac{1}{2}\sum_{i\in P_{jk}}E_i$.
\end{enumerate}
    
Let $\upsilon^{0}$ respectively $\upsilon$ be generators of $H^{0}(T,\mathbb{Z})$ respectively $H^{4}(T,\mathbb{Z})$. The next step to find the geometric interpretation of the orbifold conformal field theory of a SCFT on a two-dimensional complex torus is to note that $\pi_{*}\upsilon,\pi_{*}\upsilon^{0}\in H^{even}(X,\mathbb{Z})$ generate a primitive sublattice with quadratic form
\begin{eqnarray}
\begin{pmatrix}
0 & 2\\
2 & 0
\end{pmatrix}. \nonumber
\end{eqnarray}
The minimal primitive sublattice $\hat{K}$ containing $\pi_{*}H^{even}(T,\mathbb{Z})\subset H^{even}(X,\mathbb{Z})$ thus obeys
\begin{eqnarray}
\hat{K}^{*}/\hat{K}\cong K^{*}/K\times \mathbb{Z}^{2}_{2}\cong \Pi^{*}/\Pi\times \mathbb{Z}^{2}_{2}. \nonumber
\end{eqnarray}  
By theorem $\ref{nikulin}$ this means that $\hat{K}$ and $\Pi$ cannot be embedded in $H^{even}(X,\mathbb{Z})$ as orthogonal sublattices. Hence $H^{0}(X,\mathbb{Z})\oplus H^{4}(X,\mathbb{Z})$ cannot be a sublattice of $\hat{K}$. We choose as generators of $H^{0}(X,\mathbb{Z})$ and $H^{4}(X,\mathbb{Z})$: 
\begin{eqnarray}
\hat{\upsilon}&:=&\pi_{*}\upsilon, \\ \nonumber
\hat{\upsilon}^{0}&:=&\frac{1}{2}\pi_{*}\upsilon^{0}-\frac{1}{4}\sum_{i\in \mathbb{F}^{4}_{2}}E_{i}+\pi_{*} \upsilon \nonumber
\end{eqnarray}
We define $\hat{E}_{i}:=-\frac{1}{2}\hat{\upsilon}+E_{i}$, where $E_{i}\perp \hat{K}$. 

\begin{lem}\cite{6,7}
\label{lattice}
The lattice generated by $\hat{\upsilon}$, $\hat{\upsilon}^{0}$ and 
\begin{eqnarray}
\label{latticelattice}
\left\{\frac{1}{2}\pi_{*}(\mu_{j}\wedge \mu_{k})+\frac{1}{2}\sum_{i\in P_{jk}}\hat{E}_{i+l};l\in\mathbb{F}^{4}_{2}\right\}\text{and} \left\{\hat{E}_{i},i\in\mathbb{F}^{4}_{2}\right\} 
\end{eqnarray}
is isomorphic to $\mathbb{Z}^{4,20}$.
\end{lem}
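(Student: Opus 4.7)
The plan is to recognise $L$, the lattice generated by the listed vectors, as an even unimodular lattice of rank $24$ inside $H^{\mathrm{even}}(X,\mathbb{R})$; since the ambient space has signature $(4,20)$ and there is a unique even unimodular lattice of that signature, $L$ must be isomorphic to $\mathbb{Z}^{4,20}$. The natural approach is to present $L$ as a Nikulin-type overlattice of the orthogonal sum $\hat{K}\oplus\Pi$, invoking Theorem~\ref{nikulin} with the refined discriminant-form isomorphism $\hat{\gamma}:\hat{K}^{*}/\hat{K}\to\Pi^{*}/\Pi\times\mathbb{Z}_{2}^{2}$ already announced in Section~\ref{chapterdrei}.

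First I would compute the Gram pairings of the generators. Using $\langle\pi_{*}\alpha,\pi_{*}\beta\rangle_{X}=2\langle\alpha,\beta\rangle_{T}$ together with $E_{i}\perp\pi_{*}H^{\mathrm{even}}(T,\mathbb{Z})$ and $\langle E_{i},E_{j}\rangle=-2\delta_{ij}$, one verifies directly that $\hat{\upsilon},\hat{\upsilon}^{0}$ span a hyperbolic plane $U$ (the summand $-\tfrac14\sum_{i}E_{i}$ in $\hat{\upsilon}^{0}$ is precisely the correction making $\langle\hat{\upsilon}^{0},\hat{\upsilon}^{0}\rangle=0$); the sixteen $\hat{E}_{i}$ are mutually orthogonal of square $-2$ and orthogonal to $U$; and the classes $f_{jk,l}:=\tfrac12\pi_{*}(\mu_{j}\wedge\mu_{k})+\tfrac12\sum_{i\in P_{jk}}\hat{E}_{i+l}$ pair integrally with everything, with the key identity $\langle f_{jk,l},\hat{E}_{i}\rangle=-\mathbf{1}_{i\in P_{jk}+l}$ and the pairings $\langle f_{jk,l},f_{j'k',l'}\rangle\in\mathbb{Z}$ following from $q_{K}=-q_{\Pi}\circ\gamma$ in (\ref{niku}). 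The $24$ vectors $\hat{\upsilon},\hat{\upsilon}^{0},\hat{E}_{i},f_{jk,0}$ are linearly independent (project to $\pi_{*}H^{2}(T,\mathbb{R})$), so $L$ has rank $24$ and, as a full-rank sublattice of $H^{\mathrm{even}}(X,\mathbb{R})$, inherits signature $(4,20)$.

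Next I would exhibit $L$ as the claimed Nikulin overlattice. A copy of $\hat{K}\subset L$ is carried by $\hat{\upsilon}$ and the relations $\pi_{*}(\mu_{j}\wedge\mu_{k})=2f_{jk,0}-\sum_{i\in P_{jk}}\hat{E}_{i}\in L$, and a copy of $\Pi$ (twisted by the $-\tfrac12\hat{\upsilon}$ shifts in $\hat{E}_{i}$) appears through the differences $f_{jk,l}-f_{jk,0}=\tfrac12\sum_{i\in P_{jk}}(\hat{E}_{i+l}-\hat{E}_{i})$, which as $(j,k)$ and $l$ vary reproduce the non-trivial half-sums characterising $\Pi$. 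The gluing data of $L$ over $\hat{K}\oplus\Pi$ is then precisely the refined isomorphism $\hat{\gamma}$: the classical Nikulin part (\ref{niku}) is encoded in the $f_{jk,0}$, while the two extra $\mathbb{Z}_{2}$ factors are carried by $\hat{\upsilon}^{0}$ and by the $\hat{E}_{i}$'s. Theorem~\ref{nikulin} then certifies $L$ as the even unimodular extension of $\hat{K}\oplus\Pi$, hence $L\cong\mathbb{Z}^{4,20}$.

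The main obstacle is the explicit verification of $q_{\hat{K}}=-q_{\Pi}\circ\hat{\gamma}$ on the two new $\mathbb{Z}_{2}$ generators: the particular half-integer coefficients $\tfrac12,\tfrac14$ chosen in the definitions of $\hat{\upsilon}^{0}$ and $\hat{E}_{i}$ are precisely what make these extra classes glue correctly, and any miscalibration would manifest itself as a non-$\pm 1$ factor in the Gram determinant of $L$.
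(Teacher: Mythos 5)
First, a point of reference: the paper itself gives no proof of Lemma~\ref{lattice} --- it is quoted from Nahm--Wendland \cite{6,7} --- so your attempt can only be judged on its own merits. Your first stage is correct and verifiable: the Gram computations (that $\hat{\upsilon},\hat{\upsilon}^{0}$ span a copy of $U$, that the $\hat{E}_{i}$ are pairwise orthogonal $(-2)$-vectors orthogonal to $U$, the identity $\langle f_{jk,l},\hat{E}_{i}\rangle=-\mathbf{1}_{i\in P_{jk}+l}$, integrality and evenness of all pairings, rank $24$ and signature $(4,20)$) all check out, and reducing the claim to unimodularity plus uniqueness of the even unimodular lattice of signature $(4,20)$ is the right skeleton. (One small slip: it is the combination of $-\tfrac{1}{4}\sum_{i}E_{i}$ \emph{and} $+\pi_{*}\upsilon$ that makes $\hat{\upsilon}^{0}$ isotropic; $\tfrac{1}{2}\pi_{*}\upsilon^{0}-\tfrac{1}{4}\sum_{i}E_{i}$ alone has square $-2$.)

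The genuine gap is in your second stage. You propose to realize $L$ as a Nikulin overlattice of the \emph{orthogonal} sum $\hat{K}\oplus\Pi$, glued by a map $\hat{\gamma}:\hat{K}^{*}/\hat{K}\to\Pi^{*}/\Pi\times\mathbb{Z}_{2}^{2}$. This cannot work as stated: Theorem~\ref{nikulin} requires an anti-isometry between the discriminant forms of $\Lambda$ and of its orthogonal complement $V$, and since $\hat{K}^{*}/\hat{K}\cong\Pi^{*}/\Pi\times\mathbb{Z}_{2}^{2}\not\cong\Pi^{*}/\Pi$, the complement of $\hat{K}$ in any even unimodular lattice is \emph{not} $\Pi$ --- this is exactly the obstruction the paper records just before the lemma (``$\hat{K}$ and $\Pi$ cannot be embedded \ldots as orthogonal sublattices''). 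Concretely, your ``twisted copy of $\Pi$'' is not orthogonal to your copy of $\hat{K}$: one computes $\langle\hat{E}_{i},\pi_{*}\upsilon^{0}\rangle=\langle E_{i}-\tfrac{1}{2}\pi_{*}\upsilon,\pi_{*}\upsilon^{0}\rangle=-1\neq 0$, so the hypotheses of Theorem~\ref{nikulin} fail for the decomposition you chose, and the sentence about the ``two extra $\mathbb{Z}_{2}$ factors carried by $\hat{\upsilon}^{0}$ and the $\hat{E}_{i}$'' does not correspond to any actual gluing datum. The repair is to split differently: all generators other than $\hat{\upsilon},\hat{\upsilon}^{0}$ are orthogonal to $U=\langle\hat{\upsilon},\hat{\upsilon}^{0}\rangle$ (your own computations show this), so $L=U\perp M$ with $M$ generated by the $\hat{E}_{i}$ and the $f_{jk,l}$; the shear $x\mapsto x+\langle x,\hat{\upsilon}^{0}\rangle\hat{\upsilon}$ is an isometry on $\hat{\upsilon}^{\perp}$ undoing the $-\tfrac{1}{2}\hat{\upsilon}$ shifts and carrying $M$ onto the lattice generated by the $E_{i}$ and the classes $\tfrac{1}{2}\pi_{*}(\mu_{j}\wedge\mu_{k})+\tfrac{1}{2}\sum_{i\in P_{jk}+l}E_{i}$, which one then identifies with $H^{2}(X,\mathbb{Z})\cong\mathbb{Z}^{3,19}$ via the three-item description following Theorem~\ref{nikulin}. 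Even on that route there remains a combinatorial step you only gesture at: you must check that varying $l$ actually produces \emph{all} the hyperplane half-sums generating $\Pi$ (the sums $f_{jk,l}+f_{jk,l'}$ only directly give half-sums over hyperplanes containing a translate of some $P_{jk}$, and one has to combine several to reach the rest). Without that verification the generated lattice could a priori be a proper, non-unimodular sublattice.
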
 
In \cite{6,7,8} it is argued that this is the unique embedding which is compatible with all symmetries of the respective SCFTs. Using the generators given in Lemma \ref{lattice} we can regard a positive definite, oriented four-plane $x\subset H^{even}(T,\mathbb{Z})\otimes \mathbb{R}$ as a four-plane in $H^{even}(X,\mathbb{Z})\otimes \mathbb{R}$. 
\begin{thm}\cite{6,7}
\label{nw}
For a geometric interpretation of a SCFT $x_{T}=\Omega\bot\mho$ on a complex torus T with $\omega, V_{T}, B_{T}$ as in Lemma $\ref{am}$ the corresponding orbifold conformal field theory $x=\pi_{*}\Omega\bot \pi_{*}\mho$ has a geometric interpretation $\hat{\upsilon}$, $\hat{\upsilon}^{0}$ with $\pi_{*}\omega, V=\frac{V_{T}}{2}, B$ where
\begin{eqnarray}
B&=&\frac{1}{2}\pi_{*}B_{T}+\frac{1}{2}B_{\mathbb{Z}}, \\ \nonumber
B_{\mathbb{Z}}&=&\frac{1}{2}\sum_{i\in \mathbb{F}^{4}_{2}}\hat{E}_{i}. \nonumber
\end{eqnarray}    
\end{thm}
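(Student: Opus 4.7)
The plan is to start from the geometric interpretation of $x_T = \Omega\perp\mho$ on the torus guaranteed by Lemma \ref{am}, push the two generators of $\mho$ forward by $\pi_*$, rewrite them in the basis adapted to the new null vectors $\hat\upsilon,\hat\upsilon^0$ and the integral classes $\hat E_i$, and then recognize the resulting line as the canonical form produced by Lemma \ref{am} applied on $X$. The only inputs are the defining relations of $\hat\upsilon,\hat\upsilon^0,\hat E_i$, the doubling identity $\langle\pi_*\alpha,\pi_*\beta\rangle_X = 2\langle\alpha,\beta\rangle_T$ (implicit in the embedding $\pi_*H^2(T,\mathbb{Z}) \cong H^2(T,\mathbb{Z})(2)$ and in the $2\times 2$ intersection form on $\mathrm{span}(\pi_*\upsilon,\pi_*\upsilon^0)$), and $\langle E_i,E_j\rangle_X = -2\delta_{ij}$.

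Concretely, Lemma \ref{am} on $T$ yields
\begin{eqnarray}
\mho = \mathbb{R}\bigl\langle \omega - \langle B_T,\omega\rangle_T\,\upsilon,\ \upsilon^0 + B_T + (V_T-\tfrac{1}{2}\langle B_T,B_T\rangle_T)\,\upsilon\bigr\rangle.\nonumber
\end{eqnarray}
Applying $\pi_*$ and substituting $\hat\upsilon=\pi_*\upsilon$, the inverted formula $\pi_*\upsilon^0 = 2\hat\upsilon^0 - 2\hat\upsilon + \tfrac{1}{2}\sum_i E_i$, and $\tfrac{1}{2}\sum_i E_i = B_{\mathbb{Z}} + 4\hat\upsilon$ (from $E_i = \hat E_i + \tfrac{1}{2}\hat\upsilon$), the second generator of $\pi_*\mho$ collects to
\begin{eqnarray}
2\hat\upsilon^0 + \pi_*B_T + B_{\mathbb{Z}} + \bigl(V_T-\tfrac{1}{2}\langle B_T,B_T\rangle_T + 2\bigr)\hat\upsilon,\nonumber
\end{eqnarray}
while the first becomes $\pi_*\omega - \langle B_T,\omega\rangle_T\hat\upsilon$. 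I then compare with the canonical form from Lemma \ref{am} with data $(\pi_*\omega,\,V = V_T/2,\,B = \tfrac{1}{2}\pi_*B_T + \tfrac{1}{2}B_{\mathbb{Z}})$. Since $B_{\mathbb{Z}}$ is spanned by the $\hat E_i$, which are orthogonal to $\pi_*H^2(T,\mathbb{R})\subset K$, one has $\langle B,\pi_*\omega\rangle_X = \tfrac{1}{2}\langle\pi_*B_T,\pi_*\omega\rangle_X = \langle B_T,\omega\rangle_T$, giving the first generator. Matching the second, after inserting $V=V_T/2$, reduces to the identity $\langle B,B\rangle_X = \tfrac{1}{2}\langle B_T,B_T\rangle_T - 2$; expanding and using orthogonality gives $\langle B,B\rangle_X = \tfrac{1}{2}\langle B_T,B_T\rangle_T + \tfrac{1}{4}\langle B_{\mathbb{Z}},B_{\mathbb{Z}}\rangle_X$, while $\langle\hat E_i,\hat E_j\rangle_X = \langle E_i,E_j\rangle_X = -2\delta_{ij}$ (the $-\tfrac{1}{2}\hat\upsilon$ piece is null and orthogonal to the $E_j$) yields $\langle B_{\mathbb{Z}},B_{\mathbb{Z}}\rangle_X = -8$, as required.

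To close, I check that $(\hat\upsilon,\hat\upsilon^0)$ is a valid geometric interpretation: they are integral null vectors by Lemma \ref{lattice} and a brief computation using $\sum_iE_i\cdot\sum_iE_i = -32$ and $\langle\pi_*\upsilon^0,E_i\rangle_X=0$, they pair to $1$, and $\pi_*\omega,B\in\hat\upsilon^{\perp}\cap(\hat\upsilon^0)^{\perp}$ with the positivity $V>0,\ (\pi_*\omega)^2>0$ inherited from $T$. The uniqueness clause of Lemma \ref{am} then identifies $(\pi_*\omega,V,B)$ as the geometric data of $\pi_*\mho$ relative to $(\hat\upsilon,\hat\upsilon^0)$. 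The conceptually main point, and the only step that is not bookkeeping, is the appearance of the shift $\tfrac{1}{2}B_{\mathbb{Z}}$: it is forced because the integral generator $\hat\upsilon^0$ of $H^4(X,\mathbb{Z})$ absorbs the half-sum $-\tfrac{1}{4}\sum_i E_i$ of exceptional classes, which then reappears as the canonical half-integral $B$-field on the Kummer side.
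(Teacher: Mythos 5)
Your proposal is correct and follows essentially the same route as the paper: the paper's proof is exactly the computation of $\pi_*$ applied to the two generators of $\mho$ (with the factor $\tfrac{1}{2}$ on the second), rewritten via $\hat\upsilon$, $\hat\upsilon^{0}$ and $B_{\mathbb{Z}}$ and matched against the canonical form of Lemma \ref{am}. You have merely supplied the intermediate bookkeeping (the inversion $\pi_*\upsilon^{0}=2\hat\upsilon^{0}-2\hat\upsilon+\tfrac{1}{2}\sum_iE_i$, the identity $\langle B_{\mathbb{Z}},B_{\mathbb{Z}}\rangle=-8$, and the verification that $(\hat\upsilon,\hat\upsilon^{0})$ is a hyperbolic pair) that the paper leaves implicit.
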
 
\begin{proof}
Using the embedding $H^{even}(T,\mathbb{Z})\otimes\mathbb{R}\hookrightarrow H^{even}(X,\mathbb{Z})\otimes\mathbb{R}$ given in Lemma \ref{lattice} we calculate
\begin{eqnarray}
\pi_{*}\left(\omega-\left\langle B_{T}, \omega\right\rangle\upsilon\right)&=&\pi_{*}\omega-\left\langle \pi_{*}B,\omega\right\rangle\hat{\upsilon},  \nonumber \\
\frac{1}{2}\pi_{*}\left(\upsilon^{0}+B_{T}+\left(V_{T}-\frac{1}{2}\left\|B_{T}\right\|^{2}\right)\upsilon\right)&=& 
\hat{\upsilon}^{0}+\frac{1}{2}\pi_{*}B_{T}+\frac{1}{2}B_{\mathbb{Z}} \nonumber \\ 
&+&  \left(\frac{V_{T}}{2}-\frac{1}{2}\left\|\frac{1}{2}\pi_{*}B_{T}+\frac{1}{2}B_{\mathbb{Z}}\right\|^{2}\right)\hat{\upsilon}. \nonumber
\end{eqnarray}
This proves the theorem.
\end{proof}    

For chapter 5 the following observation is crucial:
\begin{cor}\cite{6,7}
\label{korollar}
Let $x=\pi_{*}\Omega\bot \pi_{*}\mho\subset H^{even}(X,\mathbb{Z})\otimes \mathbb{R}$ be the four-plane induced from a positive-definite, oriented four-plane $x_{T}=\Omega\bot \mho\subset H^{even}(T,\mathbb{Z})\otimes \mathbb{R}$ as in Theorem \ref{nw}. 
Then $x^{\bot}\cap H^{even}(X,\mathbb{Z})$ does not contain (-2) classes.
\end{cor}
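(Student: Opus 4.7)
The plan is to argue by contradiction using the Nikulin description of $H^{even}(X,\mathbb{Z})$ from this section. Suppose $\delta \in H^{even}(X,\mathbb{Z})$ satisfies $\delta \perp x$ and $\delta^{2} = -2$. Applying Theorem \ref{nikulin} to the primitive sublattice $\hat{K} \subset H^{even}(X,\mathbb{Z})$ with complement $\hat{\Pi} := \hat{K}^{\perp} \cap H^{even}(X,\mathbb{Z})$, write $\delta = \alpha + \beta$ with $\alpha \in \hat{K}^{*}$, $\beta \in \hat{\Pi}^{*}$, glued by $\hat{\gamma}(\bar{\alpha}) = \bar{\beta}$. A discriminant count using the description of $\hat{K}^{*}/\hat{K}$ given in this section shows $\hat{K} = \pi_{*}H^{even}(T,\mathbb{Z}) \cong H^{even}(T,\mathbb{Z})(2)$, of signature $(4,4)$; consequently $\hat{\Pi}$ is negative-definite of rank $16$. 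Since $x = \pi_{*}\Omega \perp \pi_{*}\mho$ sits inside $\hat{K} \otimes \mathbb{R}$, the hypothesis $\delta \perp x$ forces $\alpha \perp x$ in the negative-definite $4$-plane $x^{\perp} \cap (\hat{K} \otimes \mathbb{R})$. Hence $\alpha^{2}, \beta^{2} \leq 0$ and $\alpha^{2} + \beta^{2} = -2$.

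I would then case-split on whether $\alpha \in \hat{K}$ or not. If $\alpha \in \hat{K}$ then by the gluing $\beta \in \hat{\Pi}$, and it suffices to show that $\hat{\Pi}$ has no $(-2)$-class. Using the generators of Lemma \ref{lattice}, any $\beta \in \hat{\Pi}$ has the form $\beta_{2} + \beta_{4}\hat{\upsilon}$ with $\beta_{2} \in \Pi$ (orthogonality to $K$) and $\beta_{4} = -\tfrac{1}{4}\langle\beta_{2}, \sum_{i} E_{i}\rangle \in \mathbb{Z}$ (orthogonality to $\hat{\upsilon}^{0}$); so $\beta^{2} = \beta_{2}^{2} = -2$ would require a $(-2)$-vector in $\Pi$. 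But a coefficient analysis in the Kummer lattice shows the only such vectors are $\pm E_{j}$, which give $\beta_{4} = \pm\tfrac{1}{2} \notin \mathbb{Z}$, a contradiction. If instead $\alpha \notin \hat{K}$, let $\tilde{\alpha}_{T} \in H^{even}(T,\mathbb{Z}) \setminus 2H^{even}(T,\mathbb{Z})$ be the unique integer class with $2\alpha = \pi_{*}\tilde{\alpha}_{T}$. Then $\tilde{\alpha}_{T} \perp x_{T}$ and $\alpha^{2} = \tfrac{1}{2}\tilde{\alpha}_{T}^{2} \in \mathbb{Z}_{\leq 0}$. Combining with $\alpha^{2} + \beta^{2} = -2$, $\beta^{2} \leq 0$, and the evenness of $H^{even}(T,\mathbb{Z})$, one is left with $\tilde{\alpha}_{T}^{2} \in \{-4,-2\}$. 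The case $-4$ forces $\beta = 0$, hence $\bar{\alpha} = 0$ by the gluing, so $\tilde{\alpha}_{T} = 2\gamma$ with $\gamma^{2} = -1$ in an even lattice, which is impossible.

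The remaining sub-case $\tilde{\alpha}_{T}^{2} = -2$, $\beta^{2} = -1$ is the main obstacle, since it cannot be excluded by signature, evenness, or divisibility alone. To rule it out one has to use the specific form of the gluing $\hat{\gamma}$ induced by the Kummer embedding -- ultimately the explicit isomorphism $\gamma$ of (\ref{niku}) together with the half-integer B-field shift $B_{\mathbb{Z}} = \tfrac{1}{2}\sum \hat{E}_{i}$ appearing in Theorem \ref{nw} -- and check directly on generators that whenever $\bar{\alpha} \in \hat{K}^{*}/\hat{K}$ is the class of a $(-2)$-vector of $H^{even}(T,\mathbb{Z})$, the coset $\hat{\gamma}(\bar{\alpha}) \in \hat{\Pi}^{*}/\hat{\Pi}$ contains no representative of norm $-1$. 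This arithmetic verification is the translation, into the lattice language of Theorem \ref{nikulin}, of the Nahm-Wendland observation mentioned in the introduction that the orbifold construction produces no ill-defined K3 SCFTs.
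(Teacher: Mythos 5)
Your reduction is set up correctly and is in fact more systematic than the paper's own argument (which simply exhibits an explicit real basis of $x^{\bot}$ and asserts that ``trying to build a $(-2)$ class from the ansatz runs into contradictions''). The identification of $\hat{K}=\pi_{*}H^{even}(T,\mathbb{Z})\cong H^{even}(T,\mathbb{Z})(2)$, the description of $\hat{\Pi}=\hat{K}^{\perp}\cap H^{even}(X,\mathbb{Z})$ as the index-two sublattice of $\Pi\oplus\mathbb{Z}\hat{\upsilon}$ cut out by integrality of $\frac{1}{4}\langle\beta_{2},\sum_{i}E_{i}\rangle$, the observation that $\delta\perp x$ only constrains the $\hat{K}^{*}$-component, and the elimination of the cases $\alpha\in\hat{K}$ (using that the only $(-2)$-vectors of the Kummer lattice are $\pm E_{j}$, which fail the integrality condition) and $\tilde{\alpha}_{T}^{2}=-4$ are all sound.

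The problem is that you stop exactly where the statement actually has content. The sub-case $\tilde{\alpha}_{T}^{2}=-2$, $\beta^{2}=-1$ is not a loose end: for a suitable choice of $x_{T}$ any given $(-2)$-vector $\tilde{\alpha}_{T}$ can be arranged to lie in $x_{T}^{\perp}$, so ruling out this sub-case is \emph{equivalent} to the corollary, and none of the soft inputs you have used (signature, evenness, divisibility, the abstract form of Theorem \ref{nikulin}) can decide it --- as you yourself note. Writing ``one has to check directly on generators that the coset $\hat{\gamma}(\bar{\alpha})$ contains no representative of norm $-1$'' describes the required computation but does not perform it; in particular you would need to determine the gluing map $\hat{\gamma}$ on all of $\hat{K}^{*}/\hat{K}\cong(\mathbb{Z}_{2})^{8}$ (not only on the classes $\frac{1}{2}\pi_{*}(\mu_{j}\wedge\mu_{k})$ covered by (\ref{niku}), but also on those involving $\frac{1}{2}\pi_{*}\upsilon^{0}$ and $\frac{1}{2}\pi_{*}\upsilon$, which is where the shift $B_{\mathbb{Z}}$ enters), and then bound below the minimal norm of every relevant coset of $\hat{\Pi}$ in $\hat{\Pi}^{*}$, e.g.\ by a weight argument in the Reed--Muller code generated by the affine hyperplanes of $\mathbb{F}_{2}^{4}$. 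Until that finite but nontrivial lattice computation is carried out, the proof is incomplete: you have reformulated the Nahm--Wendland statement rather than proved it.
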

\begin{proof}
Let $\Omega$ be the positive-definite, oriented two-plane defined by the complex structure for the torus T. We choose a basis of the orthogonal complement $x^{\bot} \subset H^{even}(X,\mathbb{Z})\otimes \mathbb{R}$. For example:
\begin{enumerate}
\item $\hat{E}_{i}+\frac{1}{2}\hat{\upsilon}, i\in\mathbb{F}_{2}^{4}$,  
\item $\pi_{*}\eta_{i}-\left\langle\pi_{*}\eta_{i},B\right\rangle\hat{\upsilon}, i=1,\ldots,3$,
\item $\hat{\upsilon}^{0}+B-\left(V+\frac{1}{2}\left\|B\right\|^{2}\right)\hat{\upsilon}$.   
\end{enumerate}
The $\eta_{i}, i=1,\ldots,3$ are an orthogonal basis of the orthogonal complement of $\text{span}_{\mathbb{R}}\langle\omega,\Omega\rangle$ in $H^{2}(T,\mathbb{Z})\otimes \mathbb{R}$. Then the $\pi_{*}\eta_{i}, i=1,\ldots,3$ build together with the sixteen $E_{i}, i\in\mathbb{F}_{2}^{4}$ an orthogonal basis of the orthogonal complement of $\text{span}_{\mathbb{R}}\langle\pi_{*}\omega,\pi_{*}\Omega\rangle$ in $H^{2}(X,\mathbb{Z})\otimes \mathbb{R}$ with $\omega$ as in Lemma \ref{am}. B is as in Theorem $\ref{nw}$. Note that $\left\langle E_{i},E_{i}\right\rangle=-2$ but $E_{i}$ is not an element of our lattice. If we then try to build a (-2) class in $x^{\bot}$ from our ansatz we run into contradictions.  
\end{proof}

\section{Generalized Calabi-Yau Structures}

In this section we introduce generalized Calabi-Yau structures of Hitchin ~\cite{10} following ~\cite{20,160}. This is also relevant for stability conditions on twisted surfaces as we will see in section 6. \\

The Mukai pairing on the even integral cohomology $H^{even}(X,\mathbb{Z})=H^{0}(X,\mathbb{Z})\oplus H^{2}(X,\mathbb{Z})\oplus H^{4}(X,\mathbb{Z})$ is defined by
\begin{eqnarray}
	\left\langle (a_{0},a_{2},a_{4}), (b_{0},b_{2},b_{4})\right\rangle:=-a_{0}\wedge b_{4}+a_{2}\wedge b_{2}-a_{4}\wedge b_{0}.  \nonumber
\end{eqnarray}    

For an Abelian or K3 surface X the Mukai lattice is $H^{even}(X,\mathbb{Z})$ equipped with the Mukai pairing that differs from the intersection pairing in signs. Note that the hyperbolic lattice $U$ with basis $\upsilon,\upsilon^{0}$ is isomorphic to $-U$ via
\begin{eqnarray}
\upsilon&\longmapsto& -\upsilon, \nonumber \\
\upsilon^{0}&\longmapsto&\upsilon^{0}.  \nonumber
\end{eqnarray}
From now on we will work in the Mukai lattice.\\  

\begin{defn}
Let $\Omega$ be a holomorphic 2-two form on an Abelian or K3 surface X defining a complex structure. For a rational B-field $B\in H^{2}(X,\mathbb{Q})$ a \textit{generalized Calabi-Yau structure on X} is given by 
\begin{eqnarray}
\varphi:=exp(B)\Omega=\Omega+B\wedge\Omega\in H^{2}(X)\oplus H^{4}(X). \nonumber
\end{eqnarray}
\end{defn}

We define a Hodge structure of weight two on the Mukai lattice by
\begin{eqnarray}
	\widetilde{H}^{2,0}(X):=\mathbb{C}\left[\varphi\right] \nonumber
\end{eqnarray}
We write $\widetilde{H}(X,B, \mathbb{Z})$ for the lattice equipped with this Hodge structure and the Mukai pairing. 

\begin{defn}
Let $\varphi=exp(B)\Omega$ be a generalized Calabi-Yau structure. The \textit{generalized transcendental lattice $T(X,B)$} is the minimal primitive sublattice of $H^{2}(X,\mathbb{Z})\oplus H^{4}(X,\mathbb{Z})$, such that $\varphi\in T(X,B)\otimes\mathbb{C}$. 
\end{defn}
$T(X,0)=T(X)=NS(X)^{\perp}$ is the transcendental lattice and $NS(X)=H^{1,1}(X)\cap H^{2}(X,\mathbb{Z})$ is the N\'eron-Severi lattice.

\begin{defn}
Let X be a smooth complex projective variety. The \textit{(cohomological) Brauer group} is the torsion part of $H^{2}(X,\mathcal{O}^{*}_{X})$ in the analytic topology: $Br(X)=H^{2}(X,\mathcal{O}^{*}_{X})_{tor}$.\footnote{Equivalently, we could define the Brauer group as the torsion part of $H_{et}^{2}(X,\mathcal{O}_{X}^{*})$ in the \'Etale topology.}
\end{defn}   
 For an introduction to Brauer classes see ~\cite{30} or ~\cite{33}. Eventually we introduce twisted surfaces:
\begin{defn}
A \textit{twisted Abelian or K3 surface (X,$\alpha$)} consists of an Abelian or K3 surface X together with a class $\alpha \in Br(X)$. Two twisted surfaces $(X,\alpha), (Y,\alpha')$ are isomorphic if there is an isomorphism $f:X\cong Y$ with $f^{*}\alpha'=\alpha$.
\end{defn}

The exponential sequence
\begin{eqnarray}
0\longrightarrow \mathbb{Z}\longrightarrow\mathcal{O}_{X}\longrightarrow\mathcal{O}_{X}^{*}\longrightarrow 1 \nonumber
\end{eqnarray}
gives the long exact sequence
\begin{eqnarray}
\longrightarrow H^{2}(X,\mathbb{Z})\longrightarrow H^{2}(X,\mathcal{O}_{X})\longrightarrow H^{2}(X,\mathcal{O}_{X}^{*})\longrightarrow H^{3}(X,\mathbb{Z})\longrightarrow. \nonumber
\end{eqnarray}
For an Abelian or K3 surface $H_{1}(X,\mathbb{Z})$ and therefore $H^{3}(X,\mathbb{Z})$ is torsion free. So an n-torsion element of $H^{2}(X,\mathcal{O}_{X}^{*})$ is always in the image of the exponential map for a $B^{0,2}\in H^{2}(X,\mathcal{O}_{X})$ such that $nB^{0,2}\in H^{2}(X,\mathbb{Z})$ for a positive integer n. For a rational B-field $B\in H^{2}(X,\mathbb{Q})$ we use the induced homomorphism
\begin{eqnarray}
	B:T(X)&\longrightarrow& \mathbb{Q} \nonumber \\
	\gamma&\longmapsto& \int_{X}\gamma\wedge B \nonumber
\end{eqnarray}
(modulo $\mathbb{Z}$) to introduce
\begin{eqnarray}
\label{talpha}
	T(X,\alpha_{B})&:=&ker\left\{B:T(X)\rightarrow \mathbb{Q}/\mathbb{Z}\right\}.
\end{eqnarray}
The details can be found in ~\cite{20,40}. \\

\section{Stability conditions on Kummer surfaces}
\label{chapterfive}

We have an embedding of the moduli space of orbifold conformal field theories corresponding to SCFTs associated to Kummer surfaces in the moduli space of SCFTs on K3 surfaces. We are interested in the question if this embedding has a lift to Bridgeland stability conditions. In the following we show that this is indeed the case. \\

The abstract lattice $\mathbb{Z}^{4,20}$ is isometric to the even cohomology lattice $H^{even}(X,\mathbb{Z})$ equipped with the Mukai (or intersection) pairing such that the generators $\upsilon^{0}$ respectively $\upsilon$ of the hyperbolic lattice $U$  are identified with $1\in H^{0}(X,\mathbb{Z})$ respectively $\left[pt\right]\in H^{4}(X,\mathbb{Z})$ (using Poincar\'e duality). The lattice $\mathbb{Z}^{4,20}$ is also isometric to the lattice defined in Lemma $\ref{lattice}$. We will switch in this section between these isometries. \\     

Moduli spaces of N=(2,2) SCFTs can be seen as moduli spaces of generalized Calabi-Yau structures \cite{20}. Since we have an embedding of orbifold conformal field theories it is natural to ask if there is a relation between the structures we introduced in section 4 for an Abelian surface A and the associated Kummer surface $X=Km\text{ A}$.      

\begin{lem}
\label{meinlemma}
Let $(A, \alpha_{B_{A}})$ be a twisted Abelian surface and $(X,\alpha_{B})$ the associated twisted Kummer surface with B-field lift $B_{A}\in H^{2}(A,\mathbb{Q})$ as described above and B as in Theorem $\ref{nw}$. Then we have a Hodge isometry $T(A,B_{A})(2)\cong T(X,B)$.
\end{lem}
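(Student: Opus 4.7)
The strategy is to show that the pushforward $\pi_*$ from Lemma \ref{lattice} carries the generalized Calabi-Yau structure on $A$ to that on $X$, and to deduce the Hodge isometry from this identity. First I would record the properties of $\pi_*$ extended to $\mathbb{Q}$-coefficients: restricted to $H^2(A,\mathbb{Q}) \oplus H^4(A,\mathbb{Q})$, it is an injection into $H^2(X,\mathbb{Q}) \oplus H^4(X,\mathbb{Q})$ whose image is $K \otimes \mathbb{Q} \oplus H^4(X,\mathbb{Q})$, where $K = \pi_*H^2(A,\mathbb{Z}) \cong H^2(A,\mathbb{Z})(2)$ is primitive in $H^2(X,\mathbb{Z})$. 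On $H^2$ the pairing is doubled; on $H^4$ one has $\pi_*[pt]_A = [pt]_X$, which is automatically an isometry since $H^4$ is Mukai-isotropic.

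The heart of the argument is the identity $\pi_*\varphi_A = \varphi_X$. The doubling of the $H^2$-pairing gives $\pi_*B_A \wedge \pi_*\Omega_A = 2\,\pi_*(B_A \wedge \Omega_A)$ at the level of $H^4$-classes (via Poincar\'e duality). By Theorem \ref{nw}, $B = \tfrac{1}{2}\pi_*B_A + \tfrac{1}{2}B_\mathbb{Z}$ with $B_\mathbb{Z}$ supported on the Kummer lattice $\Pi$; since $\Pi \perp K$ and $\pi_*\Omega_A \in K \otimes \mathbb{C}$, the term $B_\mathbb{Z} \wedge \pi_*\Omega_A$ vanishes. Combining these,
\[
\varphi_X = \pi_*\Omega_A + \tfrac{1}{2}\pi_*B_A \wedge \pi_*\Omega_A = \pi_*\Omega_A + \pi_*(B_A \wedge \Omega_A) = \pi_*\varphi_A.
\]

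The rest is formal. Since $\pi_*$ identifies $H^2(A,\mathbb{Z}) \oplus H^4(A,\mathbb{Z})$ with the primitive sublattice $K \oplus H^4(X,\mathbb{Z}) \subset H^2(X,\mathbb{Z}) \oplus H^4(X,\mathbb{Z})$, the minimal $\mathbb{Q}$-subspace containing $\varphi_A$ corresponds under $\pi_*$ to the minimal $\mathbb{Q}$-subspace containing $\varphi_X = \pi_*\varphi_A$; intersecting with the integer lattices yields $\pi_*T(A,B_A) = T(X,B)$. The pairing scales by $2$ (the $H^4$-contributions are isotropic on both sides) and the Hodge structures correspond because $\pi_*\varphi_A = \varphi_X$, so $\pi_*$ provides the required Hodge isometry $T(A,B_A)(2) \cong T(X,B)$. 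The main subtle point is the middle computation: it is precisely the factor $\tfrac{1}{2}$ in front of $\pi_*B_A$ in Theorem \ref{nw} which compensates for the doubling of the pairing under $\pi_*$, and the correction $\tfrac{1}{2}B_\mathbb{Z}$ drops out by orthogonality with the complex structure $\pi_*\Omega_A$.
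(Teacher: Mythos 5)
Your proof is correct, but it takes a genuinely different route from the paper. The paper never computes with the generalized Calabi--Yau structures directly: it quotes the classical Hodge isometry $T(A)(2)\cong T(X)$ of Nikulin--Morrison, quotes Huybrechts' isometry $T(X,\alpha_{B})\cong T(X,B)$ (valid also for Abelian surfaces), and then ``enhances'' the first isometry to the twisted lattices $T(\cdot,\alpha)$ via the kernel description $T(X,\alpha_{B})=\ker\{B:T(X)\to\mathbb{Q}/\mathbb{Z}\}$ of (\ref{talpha}). You instead prove the key identity $\pi_{*}\varphi_{A}=\varphi_{X}$ and read off the statement from the definition of $T(\cdot,B)$ as a minimal primitive sublattice; this is self-contained, avoids the detour through the Brauer-class lattices $T(\cdot,\alpha)$, and makes explicit the point the paper leaves implicit -- namely that the factor $\tfrac12$ in $B=\tfrac12\pi_{*}B_{A}+\tfrac12 B_{\mathbb{Z}}$ compensates the doubling of the pairing and that $B_{\mathbb{Z}}$ drops out by orthogonality to $K$. (The same computation, in the form $\langle\pi_{*}\gamma,B\rangle=\langle\gamma,B_{A}\rangle$ for $\gamma\in T(A)$, is exactly what is needed to justify the paper's ``enhancement'' step, so your argument in effect supplies the missing detail of the paper's proof as well.) Two small points to tidy up: $B_{\mathbb{Z}}=\tfrac12\sum\hat{E}_{i}$ with $\hat{E}_{i}=E_{i}-\tfrac12\hat{\upsilon}$ is not purely of degree two, so strictly speaking its $H^{4}(X)$-component kills the wedge with $\pi_{*}\Omega_{A}$ for degree reasons while the $H^{2}(X)$-component $\tfrac12\sum E_{i}\in\Pi\otimes\mathbb{Q}$ dies by orthogonality to $K$; and you should cite Nikulin's result that $K=\pi_{*}H^{2}(A,\mathbb{Z})$ is primitive in $H^{2}(X,\mathbb{Z})$, which is what makes $K\oplus H^{4}(X,\mathbb{Z})$ primitive and lets the ``intersect with the integral lattice'' step go through.
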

\begin{proof}
For a rational B-field B we have a Hodge isometry 
\begin{eqnarray}
	T(X, \alpha_{B})\cong T(X, B) \nonumber
\end{eqnarray}
This was proven for K3 surfaces in ~\cite{20} and also works for Abelian surfaces. The isomorphism in Theorem $\ref{nikulin}$ defined by the map ($\ref{niku}$) sends $\pi_{*}H^{2}(T,\mathbb{Z})$ to $\pi_{*}H^{2}(T,\mathbb{Z})$. We know that the ordinary transcendental lattices of an Abelian surface A and its Kummer surface X are Hodge isometric (up to a factor of 2) ~\cite{50, 60}
\begin{eqnarray}
\label{t}
	T(A)(2)\cong T(X).
\end{eqnarray}
The Hodge isometry ($\ref{t}$) can be enhanced by ($\ref{talpha}$) to a Hodge isometry $T(A, \alpha_{B_{A}})(2)\cong T(X, \alpha_{B})$.
\end{proof}

So we have natural isometries of the above transcendental lattices for B-fields associated with orbifold CFTs. Compare also ~\cite{70}.\\  

Let us first consider unwisted surfaces with B-field $B\in NS(X)\otimes\mathbb{R}$. We consider a algebraic K3 surface X following \cite{90} and use the Mukai pairing on the integral cohomology lattice. We denote the bounded derived categories of coherent sheaves on X by $D^{b}(X):=D^{b}(\text{Coh }X)$. Let $NS(X)$ be the N\'eron-Severi lattice. We introduce the lattice $\mathcal{N}(X)=H^{0}(X,\mathbb{Z})\oplus NS(X)\oplus H^{4}(X,\mathbb{Z})$. Recall that the Mukai vector $v(E)$ of an object $E\in D^{b}(X)$ is defined by
\begin{eqnarray}
v(E)=(r(E),c_{1}(E),s(E))=ch(E)\sqrt{td(X)}\in \mathcal{N}(X) \nonumber
\end{eqnarray}
where $ch(E)$ is the Chern character and $s(E)=ch_{2}(E)+r(E)$. We define an open subset
\begin{eqnarray}
\mathcal{P}(X)\subset \mathcal{N}(X)\otimes \mathbb{C} \nonumber
\end{eqnarray}  
consisting of vectors whose real and imaginary part span positive definite two-planes in $\mathcal{N}(X)\otimes \mathbb{R}$. $\mathcal{P}(X)$ consists of two connected components that are exchanged by complex conjugation. We have a free action of $GL^{+}(2,\mathbb{R})$ by the identification $\mathcal{N}(X)\otimes \mathbb{C}\cong\mathcal{N}(X)\otimes \mathbb{R}^{2}$. A section of this action is provided by the submanifold
\begin{eqnarray}
\label{q}
\mathcal{Q}(X)=\left\{\mho\in\mathcal{P}(X)|\left\langle \mho,\mho\right\rangle=0, \left\langle \mho,\bar{\mho}\right\rangle>0,r(\mho)=1\right\}\subset \mathcal{N}(X)\otimes \mathbb{C}. \nonumber
\end{eqnarray}   
$r(\mho)$ projects $\mho\in\mathcal{N}(X)\otimes \mathbb{C}$ into $H^{0}(X,\mathbb{C})$. We can identify $\mathcal{Q}(X)$ with the tube domain
\begin{eqnarray}
\left\{B+i\omega\in NS(X)\otimes\mathbb{C}|\omega^{2}>0\right\} \nonumber
\end{eqnarray}     
by
\begin{eqnarray}
\mho=exp(B+i\omega)=\upsilon^{0}+B+i\omega+\frac{1}{2}(B^{2}-\omega^{2})\upsilon+i\left\langle B,\omega\right\rangle\upsilon \nonumber
\end{eqnarray}
with $\upsilon^{0}=1\in H^{0}(X,\mathbb{Z})$ and $\upsilon=\left[pt\right]\in H^{4}(X,\mathbb{Z})$. We denote $\mathcal{P}^{+}(X)\subset \mathcal{P}(X)$ the connected component containing vectors of the form $exp(B+i\omega)$ for an ample $\mathbb{R}$-divisor class $\omega\in NS(X)\otimes \mathbb{R}$. Let $\Delta(X)=\left\{\delta\in\mathcal{N}(X)|\left\langle \delta,\delta\right\rangle=-2\right\}$ be the root system. For each $\delta\in\Delta(X)$ we have a complex hyperplane
\begin{eqnarray}
\delta^{\bot}=\left\{\mho\in\mathcal{N}(X)\otimes \mathbb{C}|\left\langle \mho,\delta\right\rangle=0\right\}\subset\mathcal{N}(X)\otimes \mathbb{C}. \nonumber
\end{eqnarray}
We denote by
\begin{eqnarray}
\mathcal{P}^{+}_{0}(X)=\mathcal{P}^{+}(X)\backslash\bigcup_{\delta\in\Delta(X)}\delta^{\bot}\subset\mathcal{N}(X)\otimes \mathbb{C}. \nonumber
\end{eqnarray}
 
Note that there are no spherical objects in $D^{b}(A)$ on an Abelian surface A \cite{80}. 

\begin{prop}
\label{proposition}
Let A be an Abelian surface and $X=\text{Km A}$ the corresponding Kummer surface. Then we have an embedding $\mathcal{P}^{+}(A)\hookrightarrow\mathcal{P}^{+}_{0}(X)$.   
\end{prop}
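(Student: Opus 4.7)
The plan is to build the embedding using the SCFT correspondence of Theorem \ref{nw}, then check the three conditions defining $\mathcal{P}^+_0(X)$ one by one and invoke Corollary \ref{korollar} for the absence of $(-2)$-classes in the orthogonal complement. Concretely, I would define $f$ on the tube-domain slice $\mathcal{Q}(A)$ by
\begin{eqnarray}
f\bigl(\exp(B_A + i\omega_A)\bigr) := \exp\!\bigl(B + i\,\pi_*\omega_A\bigr), \quad B = \tfrac{1}{2}\pi_*B_A + \tfrac{1}{2}B_\mathbb{Z}, \nonumber
\end{eqnarray}
exactly as dictated by Theorem \ref{nw}, and then extend to all of $\mathcal{P}^+(A)$ by $GL^+(2,\mathbb{R})$-equivariance (using that $\mathcal{Q}$ is a slice for the free action). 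Injectivity and continuity are immediate, since $\pi_*$ is injective on $H^2(A,\mathbb{Q})$ so the parameters $B_A,\omega_A$ can be recovered from $B$ and $\pi_*\omega_A$.

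Next I would verify that $f$ lands in $\mathcal{P}^+(X)$. The computation $(\pi_*\omega_A)^2 = 2\omega_A^2 > 0$ guarantees that $\operatorname{Re} f(\mho_A)$ and $\operatorname{Im} f(\mho_A)$ span a positive-definite two-plane, so $f(\mho_A) \in \mathcal{P}(X)$; for the correct connected component, I would observe that when $\omega_A$ is ample on $A$ the pushforward $\pi_*\omega_A$ lies in the closure of the positive cone of $X$ containing the ample classes, so the induced orientation agrees with the one defining $\mathcal{P}^+(X)$, and this persists throughout $\mathcal{P}^+(A)$ by continuity.

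The substantive step is to show $f(\mho_A) \notin \delta^\perp$ for every $\delta \in \Delta(X)$. Here I would use that $\mathcal{N}(A)$ is the algebraic part of $H^{even}(A,\mathbb{Z})$, so the two-plane $\mho_A^{\text{plane}} := \mathrm{span}_\mathbb{R}(\operatorname{Re}\mho_A, \operatorname{Im}\mho_A)$ is automatically orthogonal to the transcendental two-plane $\Omega_A^{\text{plane}}$ defined by the holomorphic 2-form of $A$. The four-plane $x_A := \Omega_A^{\text{plane}} \perp \mho_A^{\text{plane}}$ is then positive definite, and Theorem \ref{nw} produces the corresponding four-plane $x_X = \pi_*\Omega_A^{\text{plane}} \perp f(\mho_A)^{\text{plane}}$ on $X$. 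Corollary \ref{korollar} asserts that $x_X^\perp \cap H^{even}(X,\mathbb{Z})$ contains no $(-2)$-classes; any hypothetical $\delta \in \Delta(X) \subset \mathcal{N}(X)$ with $\delta \perp f(\mho_A)$ would be algebraic, hence orthogonal to $\pi_*\Omega_A^{\text{plane}}$ as well, placing $\delta \in x_X^\perp$ — a contradiction.

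The main obstacle I expect is the orientation argument that singles out the correct connected component $\mathcal{P}^+(X)$ (rather than its complex conjugate), since $\mathcal{P}(X)$ has two components exchanged by conjugation and one has to track how positive cones behave under $\pi_*$. The avoidance of $(-2)$-classes, which might at first look like the heart of the matter, is essentially already packaged by Corollary \ref{korollar}; once the component issue is settled, the rest is verification.
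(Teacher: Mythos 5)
Your proposal is correct and follows essentially the same route as the paper: push forward via the lattice embedding of Lemma \ref{lattice}, use the explicit image of $\exp(B_A+i\omega_A)$ computed in (the proof of) Theorem \ref{nw}, handle the component via the orbifold ample class in the nef cone, and rule out orthogonal $(-2)$-classes by combining $\mathcal{N}(X)\perp\mathbb{C}[\pi_*\Omega]$ with Corollary \ref{korollar}. The only cosmetic difference is the factor $\tfrac12$ in front of $\pi_*\omega_A$ appearing in the paper's formula (\ref{fuck}), which is irrelevant since $\mathcal{P}^+$ is invariant under the $GL^+(2,\mathbb{R})$-action.
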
            
\begin{proof}
An element of $\mathcal{P}^{+}(A)$ is of the form $exp(B+i\omega)\circ g$ for $g\in GL^{+}(2,\mathbb{R})$, $B\in NS(A)\otimes\mathbb{R}$ and $\omega\in NS(A)\otimes\mathbb{R}$ with $\omega^{2}>0$ \cite{80}. Let $\pi_{*}$ be the map induced by the rational map $\pi:A\rightarrow X$. The action of $GL^{+}(2,\mathbb{R})$ and the map $\pi_{*}$ commute. By Lemma $\ref{lattice}$ we have an injective map
\begin{eqnarray}
\label{pistar}
i: H^{even}(A,\mathbb{Z})\otimes\mathbb{R}\hookrightarrow H^{even}(X,\mathbb{Z})\otimes\mathbb{R}.
\end{eqnarray}
The 2-plane $\Omega$ given by the complex structure of the Abelian surface A defines the complex structure on $X$ by the 2-plane $\pi_{*}\Omega$. Therefore $\mathcal{N}(A)$ is mapped to $\mathcal{N}(X)$ and we get an induced map form $\mathcal{P}(A)$ to $\mathcal{P}(X)$. The proof of Theorem $\ref{nw}$ shows that vectors of the form $1/2\pi_{*}(exp(B_{T}+i\omega))$ for $B_{T},\omega\in NS(A)\otimes\mathbb{R}$  are sent to vectors
\begin{eqnarray}
\label{fuck}
\hat{\upsilon}^{0}+B+\frac{1}{2}\left(B^{2}-\left(\frac{1}{2}\pi_{*}\omega\right)^{2}\right)\hat{\upsilon}+i\left(\frac{1}{2}\pi_{*}\omega+\left\langle B, \frac{1}{2}\pi_{*}\omega\right\rangle\hat{\upsilon}\right) 
\end{eqnarray}
in $\mathcal{N}(X)\otimes\mathbb{C}$ with B as in Lemma $\ref{meinlemma}$. The elements of $\mathcal{N}(X)$ are contained in the orthogonal complement of $H^{2,0}(X)=\mathbb{C}[\pi_{*}\Omega]$ where $\pi_{*}\Omega=\pi_{*}\Omega_{1}+i\pi_{*}\Omega_{2}$.\footnote{By abuse of notation we denote the holomorphic two-form defining the complex structure and the 2-plane defined by it with the same symbol.} By corollary $\ref{korollar}$ we know that there are no roots of $H^{even}(X,\mathbb{Z})$ in the orthogonal complement of the 4-plane spanned by $\pi_{*}\Omega_{1}, \pi_{*}\Omega_{2}$ and the real and imaginary part of a vector of the form $(\ref{fuck})$ in $H^{even}(X,\mathbb{Z})\otimes\mathbb{R}$. Since $\pi_{*}\omega$ is an orbifold ample class in the closure of the ample cone, this proves the proposition.         
\end{proof}

The results of ~\cite{90} can be generalized for twisted surfaces ~\cite{100}. Any class $\alpha\in Br(X)=H^{2}(X,\mathcal{O}^{*}_{X})_{tor}$ can be represented by a $\check{C}ech$ 2-cocycle $\left\{\alpha_{ijk}\in \Gamma(U_{i}\cap U_{j}\cap U_{k}, \mathcal{O}_{X}^{*})\right\}$ on an analytic open cover $\lbrace U_{i}\rbrace$ of X.

\begin{defn}
An \textit{$(\alpha_{ijk})$-twisted coherent sheaf E} consists of pairs $(\left\{E_{i}\right\},\left\{\varphi_{ij}\right\})$ such that $E_{i}$ is a coherent sheaf on $U_{i}$ and $\varphi_{ij}:E_{j}|_{U_{i}\cap U_{j}}\rightarrow E_{i}|_{U_{i}\cap U_{j}}$ are isomorphisms satisfying the following conditions:

\begin{enumerate}
\item $\varphi_{ii}=id$\
\item $\varphi_{ji}=\varphi_{ij}^{-1}$\
\item $\varphi_{ij}\circ\varphi_{jk}\circ\varphi_{ki}=\alpha_{ijk}\cdot id$.
\end{enumerate}
\end{defn}
We denote the equivalence class of such Abelian categories of twisted coherent sheaves by $Coh(X,\alpha)$ and the bounded derived category by $D^{b}(X,\alpha)$. For details consult ~\cite{30}. For a realization of the following notions one has to fix a B-field lift B of the Brauer class $\alpha$ such that $\alpha=\alpha_{B}=exp(B^{0,2})$. The twisted Chern character
\begin{eqnarray}
	ch^{B}:D^{b}(X,\alpha_{B})\longrightarrow \widetilde{H}(X,B,\mathbb{Z}) \nonumber
\end{eqnarray}
introduced in ~\cite{160} identifies the numerical Grothendieck group with the twisted N\'eron-Severi group $NS(X,\alpha_{B}):=\widetilde{H}^{1,1}(X,B,\mathbb{Z})$. As in the untwisted case we denote by 
\begin{eqnarray}
\mathcal{P}(X,\alpha_{B})\subset NS(X,\alpha_{B})\otimes\mathbb{C} \nonumber
\end{eqnarray}
the open subset of vectors whose real and imaginary part span a positive plane in $NS(X,\alpha_{B})\otimes\mathbb{R}$. Let $\mathcal{P}^{+}(X,\alpha_{B})\subset \mathcal{P}(X,\alpha_{B})$ be the component containing vectors of the form $\exp(B+i\omega)$, where $B\in H^{2}(X,\mathbb{Q})$ is a B-field lift of $\alpha$ and $\omega$ a real ample class. $NS(A,\alpha_{B_{A}})$ is embedded into $NS(X,\alpha_{B})$, since we have
$\pi_{*}\Omega+\left\langle B, \pi_{*}\Omega\right\rangle\hat{\upsilon}=\pi_{*}(\Omega+\left\langle B_{A},\Omega\right\rangle\upsilon)$. Therefore Proposition 5.1 generalizes with similar arguments as above to 
\begin{prop}
Let $(A,\alpha_{B_{A}})$ be a twisted Abelian surface and $(X,\alpha_{B})$ the twisted Kummer surface with X the Kummer surface of A and B-field lifts as in Lemma $\ref{meinlemma}$. Then we have an embedding $\mathcal{P}^{+}(A,\alpha_{B_{A}})\hookrightarrow\mathcal{P}_{0}^{+}(X,\alpha_{B})$.   
\end{prop}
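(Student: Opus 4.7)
The plan is to adapt the proof of Proposition 5.1 to the twisted setting, with Lemma \ref{meinlemma} as the key compatibility input. First I would verify that the embedding (\ref{pistar}) respects the twisted Hodge structures: the identity $\pi_{*}(\Omega + \langle B_{A},\Omega\rangle\upsilon) = \pi_{*}\Omega + \langle B, \pi_{*}\Omega\rangle\hat\upsilon$ noted just before the statement shows that, up to the customary factor of $1/2$, $\pi_{*}$ sends the generator $\varphi_{A} = \exp(B_{A})\Omega$ of $\widetilde{H}^{2,0}(A,B_{A})$ to the generator $\varphi = \exp(B)\pi_{*}\Omega$ of $\widetilde{H}^{2,0}(X,B)$. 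Taking Hodge-orthogonal complements in the respective Mukai lattices then yields an inclusion $NS(A,\alpha_{B_{A}})\otimes\mathbb{C} \hookrightarrow NS(X,\alpha_{B})\otimes\mathbb{C}$, which plays the role of the map $\mathcal{N}(A)\otimes\mathbb{C} \to \mathcal{N}(X)\otimes\mathbb{C}$ used in the proof of Proposition 5.1.

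Next, since $GL^{+}(2,\mathbb{R})$ commutes with $\pi_{*}$ and preserves the defining conditions of $\mathcal{P}^{+}$, it is enough to follow a generic element $\exp(B_{A}+i\omega)$ of $\mathcal{P}^{+}(A,\alpha_{B_{A}})$ with $\omega$ in the ample cone of $A$. The computation in the proof of Theorem \ref{nw}, mirrored in the proof of Proposition 5.1, shows that $\tfrac{1}{2}\pi_{*}\exp(B_{A}+i\omega)$ is precisely the vector (\ref{fuck}), i.e.\ of the form $\exp(B + i\tfrac{1}{2}\pi_{*}\omega)$ with $B$ the twisted Kummer B-field of Theorem \ref{nw}. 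Since the Hodge isometry of Lemma \ref{meinlemma} preserves signature (the overall factor of $2$ being irrelevant for positivity), the image two-plane is positive definite; and since $\tfrac{1}{2}\pi_{*}\omega$ is an orbifold ample class in the closure of the ample cone of $X$, the image sits in the connected component $\mathcal{P}^{+}(X,\alpha_{B})$.

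It remains to rule out roots. Given $\delta\in NS(X,\alpha_{B})$ with $\delta^{2}=-2$ orthogonal to the image two-plane, the $(1,1)$-type condition forces $\delta$ to be orthogonal also to $\widetilde{H}^{2,0}(X,B) = \mathbb{C}[\varphi]$, hence to the positive definite four-plane spanned by $\mathrm{Re}\,\varphi,\mathrm{Im}\,\varphi$ together with the real and imaginary parts of the image. This four-plane coincides, as a real subspace of $H^{even}(X,\mathbb{Z})\otimes\mathbb{R}$, with the SCFT four-plane $x=\pi_{*}\Omega\perp\pi_{*}\mho$ of Corollary \ref{korollar} attached to an SCFT on $A$ with data $(\omega,B_{A},V)$ for any auxiliary $V>0$. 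Corollary \ref{korollar} then forbids the existence of such $\delta$, so the image lies in $\mathcal{P}_{0}^{+}(X,\alpha_{B})$.

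The main obstacle is this last identification of four-planes. One must translate between the $\mathcal{P}$-language basis $\{\mathrm{Re}\,\varphi,\mathrm{Im}\,\varphi,\mathrm{Re}\,\mho,\mathrm{Im}\,\mho\}$ and a Lemma \ref{am}-style SCFT basis, and verify that the two real spans agree; this is a direct but bookkeeping-heavy calculation in the $\hat\upsilon,\hat\upsilon^{0}$-components that is already implicit in the proof of Theorem \ref{nw}, once $V$ is chosen compatibly with the degree-four part of (\ref{fuck}).
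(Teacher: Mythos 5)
Your proposal is correct and follows essentially the same route as the paper: the paper's own argument consists precisely of the identity $\pi_{*}(\Omega+\langle B_{A},\Omega\rangle\upsilon)=\pi_{*}\Omega+\langle B,\pi_{*}\Omega\rangle\hat{\upsilon}$ to embed $NS(A,\alpha_{B_{A}})$ into $NS(X,\alpha_{B})$, followed by the remark that Proposition 5.1 then ``generalizes with similar arguments,'' i.e.\ the $GL^{+}(2,\mathbb{R})$-reduction, the computation yielding a vector of the form (\ref{fuck}), and the exclusion of roots via Corollary \ref{korollar}. You have simply spelled out the details that the paper leaves implicit, including the four-plane identification needed to invoke Corollary \ref{korollar}.
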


\subsection{Bridgeland stability conditions}

Bridgeland introduces stability conditions on a triangulated category $\mathcal{D}$ \cite{80}. For a review see \cite{85}. In our case this will be the bounded derived categories of coherent sheaves $D^{b}(X):=D^{b}(\text{Coh }X)$ on an Abelian or a K3 surface X. We denote by $K(\mathcal{D})$ the corresponding Grothendieck group of $\mathcal{D}$.

\begin{defn}~\cite{80}
\label{bridgeland}
A \textit{stability condition on a triangulated category} $\mathcal{D}$ consists of a group homomorphism $Z:K(\mathcal{D})\rightarrow \mathbb{C}$ called the \textit{central charge} and of full additive subcategories $\mathcal{P}(\phi)\subset\mathcal{D}$ for each $\phi\in \mathbb{R}$, satisfying the following axioms:
\begin{enumerate}
\item if $0\neq E\in\mathcal{P}(\phi)$, then $Z(E)=m(E)exp(i\pi\phi)$ for some $m(E)\in \mathbb{R}_{>0}$;
\item $\forall \phi\in\mathbb{R}, \mathcal{P}(\phi+1)=\mathcal{P}(\phi)\left[1\right]$;
\item if $\phi_{1}>\phi_{2}$ and $A_{j}\in\mathcal{P}(\phi_{j})$, then $Hom_{\mathcal{D}}(A_{1},A_{2})=0;$
\item for $0\neq E\in\mathcal{D}$, there is a finite sequence of real numbers $\phi_{1}>\cdots>\phi_{n}$ and a collection of triangles 
$$E_{i-1}\longrightarrow E_{i}\longrightarrow A_{i}$$ with $E_{0}=0$, $E_{n}=E$ and $A_{j}\in\mathcal{P}(\phi_{j})$ for all j. 
\end{enumerate}
\end{defn} 

A stability function on an Abelian category $\mathcal{A}$ is a group homomorphism $Z:K(\mathcal{A})\rightarrow\mathbb{C}$ such that for any nonzero $E\in\mathcal{A}$, $Z(E)$ lies in $H:=\left\{0\neq z\in\mathbb{C}| z/\left|z\right|=exp(i\pi\phi) \text{ with } 0<\phi\leq 1)\right\}$. 

\begin{defn}~\cite{38}
A \textit{t-structure on a triangulated category $\mathcal{D}$} is a pair of strictly full subcategories $(\mathcal{D}^{\leq 0},\mathcal{D}^{\geq 0})$ such that with $\mathcal{D}^{\leq n}=\mathcal{D}^{\leq 0}[-n]$ and $\mathcal{D}^{\geq n}=\mathcal{D}^{\geq 0}[-n]$:

\begin{enumerate}
\item  $\mathcal{D}^{\leq 0}\subset \mathcal{D}^{\leq 1}$ and $\mathcal{D}^{\geq 1}\subset \mathcal{D}^{\geq 0}$,
\item $Hom(X,Y)=0$ for $X\in \text{Ob }\mathcal{D}^{\leq 0}, Y\in \text{Ob }\mathcal{D}^{\geq 1}$,
\item For any $X\in \text{Ob }\mathcal{D}$ there is a distinguished triangle $A\rightarrow X\rightarrow B\rightarrow A[1]$ with $A\in \text{Ob }\mathcal{D}^{\leq 0}, B\in\text{Ob }\mathcal{D}^{\geq 1}$.
\end{enumerate}
\end{defn}

The heart of the t-structure is the full subcategory $\mathcal{D}^{\geq 0}\cap \mathcal{D}^{\leq 0}$. Important for the construction of stability conditions is

\begin{prop}~\cite{80}
To give a stability condition on a triangulated category $\mathcal{D}$ is equivalent to giving a bounded t-structure on $\mathcal{D}$ and a stability function on its heart which has the Harder-Narasimhan property.
\end{prop}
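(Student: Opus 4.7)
The plan is to prove both implications by explicit construction, following Bridgeland's original argument. For the forward direction, suppose $(Z,\mathcal{P})$ is a stability condition on $\mathcal{D}$. I would define the candidate heart as $\mathcal{A} := \mathcal{P}((0,1])$, the extension-closed full subcategory generated by the slices $\mathcal{P}(\phi)$ with $\phi \in (0,1]$, and set $\mathcal{D}^{\leq 0} := \mathcal{P}((0,\infty))$ and $\mathcal{D}^{\geq 1} := \mathcal{P}((-\infty,0])$. The containment $\mathcal{D}^{\leq 0} \subset \mathcal{D}^{\leq 1}$ is immediate. The Hom-vanishing $\mathrm{Hom}(\mathcal{D}^{\leq 0}, \mathcal{D}^{\geq 1}) = 0$ reduces, by extension closedness, to Hom-vanishing between the individual slices $\mathcal{P}(\phi)$ with $\phi > 0$ and $\mathcal{P}(\psi)$ with $\psi \leq 0$, which is axiom 3 of Definition \ref{bridgeland}. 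For an arbitrary $X \in \mathcal{D}$, axiom 4 yields a finite filtration with factors $A_i \in \mathcal{P}(\phi_i)$ and strictly decreasing phases; cutting this filtration at the threshold $\phi = 0$ produces the distinguished triangle required by the t-structure axioms. Boundedness is immediate because axiom 4 produces only finitely many factors. The restriction of $Z$ to $K(\mathcal{A})$ lands in $H$ since every $E \in \mathcal{P}(\phi)$ with $\phi \in (0,1]$ satisfies $Z(E) = m(E) \exp(i\pi\phi)$ by axiom 1, and the Harder-Narasimhan property on $\mathcal{A}$ is just axiom 4 restricted to objects of $\mathcal{A}$.

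For the converse, let $\mathcal{A}$ be the heart of a bounded t-structure and $Z$ a stability function on $\mathcal{A}$ with the HN property. For $\phi \in (0,1]$ I would define $\mathcal{P}(\phi) \subset \mathcal{A}$ to consist of the zero object together with those $Z$-semistable objects of $\mathcal{A}$ whose phase is exactly $\phi$, where the phase is extracted uniquely from $Z(E) = m(E)\exp(i\pi\phi)$ with $\phi \in (0,1]$. For general $\phi \in \mathbb{R}$, write $\phi = \phi_{0} + n$ with $\phi_{0} \in (0,1]$ and $n \in \mathbb{Z}$, and define $\mathcal{P}(\phi) := \mathcal{P}(\phi_{0})[n]$, which forces axiom 2. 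Axiom 1 is immediate. Axiom 3 splits into two cases: for phases in the same shifted heart, it is the standard fact that a nonzero map between $Z$-semistable objects of $\mathcal{A}$ of strictly decreasing phase gives a contradiction with the stability function inequality; for phases in different shifts it reduces to the t-structure Hom-vanishing $\mathrm{Hom}(\mathcal{A}, \mathcal{A}[-k]) = 0$ for $k > 0$.

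The main obstacle will be axiom 4 in the converse direction, namely producing the global HN filtration in $\mathcal{D}$ from the local one in $\mathcal{A}$. The plan is to combine two filtrations. First, boundedness of the t-structure produces for any $E \in \mathcal{D}$ a finite filtration with successive cones in shifted hearts $\mathcal{A}[k]$ for a bounded range of $k$, obtained by iterated truncations $\tau^{\leq n}$, $\tau^{\geq n}$ with respect to the cohomology functors of the t-structure. Second, applying the HN property of $Z$ on $\mathcal{A}$ to each such cone (after shifting back into $\mathcal{A}$) refines this into a filtration whose successive quotients are $Z$-semistable and hence lie in some $\mathcal{P}(\phi)$. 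Stitching the two filtrations together via repeated use of the octahedral axiom, and observing that phases in $\mathcal{P}((0,1])[k]$ lie in $(k,k+1]$ so that strict decrease of phases across the boundaries between shifted hearts is automatic, produces the desired sequence $\phi_{1} > \cdots > \phi_{n}$. This assembly and the accompanying uniqueness check constitute the delicate combinatorial bookkeeping; I would organise it precisely as in Bridgeland's original treatment in \cite{80} rather than reinvent the argument from scratch.
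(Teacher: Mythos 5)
Your outline is correct and is essentially Bridgeland's own proof of this result (Proposition 5.3 of \cite{80}): the paper under review gives no proof at all, merely citing \cite{80}, so there is nothing internal to compare against. Both directions of your construction --- the heart $\mathcal{P}((0,1])$ with aisle $\mathcal{P}((0,\infty))$ in the forward direction, and the slicing by semistable objects of fixed phase together with the interleaving of the t-structure truncation with the Harder--Narasimhan filtrations via the octahedral axiom in the converse --- match the standard argument, and the one step you flag as delicate (assembling the global filtration and checking strict decrease of phases across shifted hearts) is indeed where the real work lies.
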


We recall some results of \cite{80}. The subcategory $\mathcal{P}(\phi)$ is Abelian and its nonzero objects are said to be semistable of phase $\phi$ for a stability condition $\sigma=(Z,\mathcal{P})$. We call its simple objects stable. The objects $A_{i}$ in Definition $\ref{bridgeland}$ are called semistable factors of E with respect to $\sigma$. We write $\phi^{+}_{\sigma}:=\phi_{1}$ and $\phi^{-}_{\sigma}:=\phi_{n}$. The mass of E is defined to be $m_{\sigma}(E)=\sum_{i}\left|Z(A_{i})\right|\in\mathbb{R}$. A stability condition is locally-finite if there exists some $\epsilon>0$ such that for all $\phi\in\mathbb{R}$ each quasi-Abelian subcategory $\mathcal{P}((\phi-\epsilon,\phi+\epsilon))$ is of finite length. In this case $\mathcal{P}(\phi)$ is of finite length and every semistable object has a finite Jordan-Holder filtration into stable objects of the same phase.\\
The set $Stab(\mathcal{D})$ of locally finite stability conditions on a triangulated category $\mathcal{D}$ has a topology induced by the generalised metric\footnote{This generalised metric has the usual properties of a metric but can take the value $\infty$.}:
\begin{eqnarray}
d(\sigma_{1},\sigma_{2})=sup_{0\neq E\in\mathcal{D}} \left\{\left|\phi^{-}_{\sigma_{2}}(E)-\phi^{-}_{\sigma_{1}}(E)\right|, \left|\phi^{+}_{\sigma_{2}}(E)-\phi^{+}_{\sigma_{1}}(E)\right|, \left|log \frac{m_{\sigma_{2}}(E)}{m_{\sigma_{1}}(E)}\right|\right\}. \nonumber
\end{eqnarray}
There is an action of the group of auto equivalences $Aut(\mathcal{D})$ of the derived category $\mathcal{D}$ on $Stab(\mathcal{D})$. For $\sigma=(Z,\mathcal{P})\in Stab(\mathcal{D})$ and $\Phi\in Aut(\mathcal{D})$ define the new stability condition $\Phi(\sigma)=(Z\circ\Phi_{*}^{-1},\mathcal{P}')$ with $\mathcal{P}'(\phi)=\Phi(\mathcal{P}(\phi))$. Here $\Phi_{*}$ is the induced automorphism of $K(\mathcal{D})$ of $\Phi$. Note that auto equivalences preserve the generalised metric.\\
The universal covering $\widetilde{GL^{+}(2,\mathbb{R})}$ of $GL^{+}(2,\mathbb{R})$ acts on the metric space $Stab(\mathcal{D})$ on the right in the following way: Let $\left(G,f\right)\in\widetilde{GL^{+}(2,\mathbb{R})}$ with $G\in GL^{+}(2,\mathbb{R})$ and an increasing function $f:\mathbb{R}\rightarrow \mathbb{R}$ with $f(\phi+1)=f(\phi)+1$ such that $G exp(i\pi\phi)/\left| exp(i\pi\phi)\right|=exp(2i\pi f(\phi))$ for all $\phi\in\mathbb{R}$. A pair $(G,f)\in \widetilde{GL^{+}(2,\mathbb{R})}$ maps $\sigma=(Z,\mathcal{P})\in Stab(\mathcal{D})$ to $(Z',P')=(G^{-1}\circ Z,\mathcal{P}\circ f)$.\\
The subgroup $\mathbb{C}\hookrightarrow \widetilde{GL^{+}(2,\mathbb{R})}$ acts freely on $Stab(\mathcal{D})$ for a triangulated category $\mathcal{D }$ by sending a complex number $\lambda$ and a stability condition $(Z,\mathcal{P})$ to a stability condition $(Z', \mathcal{P}')$ where $Z'(E)=exp(-i\pi\lambda)Z(E)$ and $\mathcal{P}'(\phi)=\mathcal{P}(\phi+ Re(\lambda))$. Note that this is for $\lambda=n\in\mathbb{Z}$ just the action of the shift functor $[n]$.

We are interested in the bounded derived category of coherent sheaves $D^{b}(X)$ on a smooth projective variety X over the complex numbers. In this case we say a stability condition is numerical if the central charge $Z:K(X)\rightarrow\mathbb{C}$ factors through the quotient group $\mathcal{N}(X)=K(X)/K(X)^{\bot}$.  Let us write $Stab(X)$ for the set of all locally finite numerical stability conditions on $\mathcal{D}^{b}(X)$. The Euler form $\chi$ is non-degenerate on $\mathcal{N}(X)\otimes\mathbb{C}$, so the central charge takes the form
\begin{eqnarray}
Z(E)=-\chi(p(\sigma),v(E)) \nonumber
\end{eqnarray}          
for some vector $p(\sigma)\in\mathcal{N}(X)\otimes\mathbb{C}$, defining a map $p:Stab(X)\longrightarrow \mathcal{N}(X)\otimes \mathbb{C}$. We have the following important theorem
\begin{thm}\cite{80} 
For each connected component $Stab^{*}(X)\subset Stab(X)$, there is a linear subspace  $V\subset \mathcal{N}(X)\otimes \mathbb{C}$ such that
\begin{eqnarray}
p:Stab^{*}(X)\longrightarrow \mathcal{N}(X)\otimes \mathbb{C} \nonumber
\end{eqnarray}  
is a local homeomorphism onto an open subset of the subspace V. In particular, $Stab^{*}(X)$ is a finite-dimensional complex manifold.
\end{thm}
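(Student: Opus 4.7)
The plan is to deduce the statement from Bridgeland's general deformation theorem for stability conditions on a triangulated category $\mathcal{D}$, applied to $\mathcal{D} = D^{b}(X)$. That theorem (Theorem 1.2 of \cite{80}) asserts that the forgetful map $\mathcal{Z}: Stab(\mathcal{D}) \to Hom_{\mathbb{Z}}(K(\mathcal{D}),\mathbb{C})$ sending $(Z,\mathcal{P}) \mapsto Z$ is a local homeomorphism onto an open subset of a distinguished linear subspace of the target. More concretely, for any $\sigma_{0} = (Z_{0},\mathcal{P}_{0})$ there exist a neighborhood $U \ni \sigma_{0}$ and an $\epsilon > 0$ such that every group homomorphism $W: K(\mathcal{D}) \to \mathbb{C}$ satisfying $|(W - Z_{0})(E)| < \epsilon\,|Z_{0}(E)|$ on all $\sigma_{0}$-semistable $E$ lifts uniquely to a stability condition in $U$. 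I would take this as the main input.

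Next I would specialize to the numerical setting. Requiring $Z$ to factor through $\mathcal{N}(X) = K(X)/K(X)^{\perp}$ restricts the target of $\mathcal{Z}$ to $Hom_{\mathbb{Z}}(\mathcal{N}(X),\mathbb{C})$. Since $X$ is smooth projective, $\mathcal{N}(X)$ is a finitely generated free abelian group on which the Euler form $\chi$ is non-degenerate by construction. Tensoring with $\mathbb{C}$ then provides a $\mathbb{C}$-linear isomorphism
\begin{eqnarray}
\mathcal{N}(X) \otimes \mathbb{C} \;\longrightarrow\; Hom_{\mathbb{Z}}(\mathcal{N}(X),\mathbb{C}), \quad p \longmapsto \bigl(v \mapsto -\chi(p,v)\bigr), \nonumber
\end{eqnarray}
which intertwines $\mathcal{Z}$ with the map $p: Stab(X) \to \mathcal{N}(X) \otimes \mathbb{C}$ appearing in the statement. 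Transported through this isomorphism, the deformation theorem shows that $p$ is a local homeomorphism around every point onto an open subset of some finite-dimensional linear subspace $V_{\sigma_{0}} \subset \mathcal{N}(X) \otimes \mathbb{C}$.

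Finally I would argue that a single $V$ works on an entire connected component. The subspace $V_{\sigma_{0}}$ records precisely which linear perturbations of $Z_{0}$ are realized as central charges of nearby stability conditions, and the deformation theorem forces $V_{\sigma_{1}} = V_{\sigma_{0}}$ whenever $\sigma_{1}$ lies in the neighborhood produced for $\sigma_{0}$. Hence $\sigma \mapsto V_{\sigma}$ is locally constant, and by connectedness of $Stab^{*}(X)$ it is constant on the entire component. Finite-dimensionality as a complex manifold then follows immediately, since $V \subset \mathcal{N}(X) \otimes \mathbb{C}$ is itself finite-dimensional and $p$ is locally a homeomorphism.

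The principal obstacle is Bridgeland's deformation theorem itself: one must show that a small perturbation of the central charge $Z_{0}$ admits a compatible slicing $\mathcal{P}$ still satisfying the Harder-Narasimhan property. This is the technical heart of \cite{80}, carried out through a careful analysis of the quasi-Abelian subcategories $\mathcal{P}((\phi - \epsilon, \phi + \epsilon))$ together with the local finiteness hypothesis; it is also the reason the image of $\mathcal{Z}$ typically lands in a proper linear subspace rather than in all of $Hom_{\mathbb{Z}}(K(\mathcal{D}),\mathbb{C})$. Once this input is granted, the remaining steps reduce to bookkeeping with the non-degenerate Euler pairing and a standard connectedness argument.
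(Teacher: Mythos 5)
The paper offers no proof of this statement; it is quoted verbatim from Bridgeland's work \cite{80}, and your proposal correctly reconstructs the argument given there: the deformation theorem for central charges, the identification $Hom_{\mathbb{Z}}(\mathcal{N}(X),\mathbb{C})\cong\mathcal{N}(X)\otimes\mathbb{C}$ via the non-degenerate Euler pairing, and the local constancy of the subspace $V_{\sigma}$ combined with connectedness. This is essentially the same route as the cited source, with the genuinely hard content (existence of the compatible slicing after perturbing $Z_{0}$) correctly identified and appropriately deferred to \cite{80}.
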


We have the following description of the stability manifold for algebraic K3 surfaces:
\begin{thm}\cite{90}
\label{bridgeland2}
There is a distinguished connected component $Stab^{\dagger}(X)\subset Stab(X)$ which is mapped by $p$ onto the open subset $\mathcal{P}_{0}^{+}(X)$. The induced map $p: Stab^{\dagger}(X)\rightarrow \mathcal{P}_{0}^{+}(X)$ is a covering map. We denote by $Aut^{\dagger}_{0}(D^{b}(X))$ the subgroup of cohomological trivial auto equivalences of $D^{b}(X)$ which preserve the connected component $Stab^{\dagger}(X)$.  $Aut^{\dagger}_{0}(D^{b}(X))$ acts freely on $Stab^{\dagger}(X)$ and is the group of deck transformations of this covering. 
\end{thm}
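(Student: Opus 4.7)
The plan is to follow Bridgeland's original three-step strategy: first, construct an explicit family of ``geometric'' stability conditions $\sigma_{\omega,B}$ parametrized by pairs $(\omega,B)$ with $\omega$ ample, such that their images under $p$ cover an open subset $U(X) \subset \mathcal{P}_0^+(X)$; second, enlarge this family by path-lifting and the $\widetilde{GL^+(2,\mathbb{R})}$-action to obtain a connected component $Stab^\dagger(X)$ whose image exhausts $\mathcal{P}_0^+(X)$; third, identify the deck transformations of the covering with the cohomologically trivial autoequivalences.

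For the first step, given $\omega \in NS(X)\otimes\mathbb{R}$ ample and $B \in NS(X)\otimes\mathbb{R}$, I would construct a torsion pair $(\mathcal{T}_{\omega,B}, \mathcal{F}_{\omega,B})$ on $\mathrm{Coh}(X)$ by thresholding $\omega$-slope at the value $\omega\cdot B$, then tilt to obtain a heart $\mathcal{A}(\omega,B) \subset D^b(X)$. Set $Z_{\omega,B}(E) = -\chi(\exp(B+i\omega), v(E))$. Using the proposition equating stability conditions with bounded t-structures plus stability functions satisfying the Harder-Narasimhan property, it suffices to verify that $Z_{\omega,B}$ is a stability function on $\mathcal{A}(\omega,B)$ with HN filtrations; positivity $\mathrm{Im}\,Z_{\omega,B}(E) \geq 0$ follows from the Hodge index theorem, strict positivity from the Bogomolov--Gieseker inequality applied to $\omega$-semistable sheaves, and the HN property from Noetherianity of the tilted heart together with the support property. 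The nondegeneracy $Z_{\omega,B}(E) \neq 0$ for nonzero $E \in \mathcal{A}(\omega,B)$ amounts to $\exp(B+i\omega) \notin \delta^\perp$ for $\delta \in \Delta(X)$, which is exactly the defining condition of $\mathcal{P}_0^+(X)$.

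Next, I would set $U(X)$ to be the $\widetilde{GL^+(2,\mathbb{R})}$-orbit of the $\sigma_{\omega,B}$; this is open and connected, and I let $Stab^\dagger(X)$ be the connected component of $Stab(X)$ containing $U(X)$. Since $p$ is a local homeomorphism by the preceding theorem, $p(Stab^\dagger(X))$ is open in $\mathcal{P}_0^+(X)$. Surjectivity and the covering property both reduce to path-lifting: a continuous path in $\mathcal{P}_0^+(X)$ emanating from $p(U(X))$ lifts uniquely to $Stab^\dagger(X)$ provided the masses $m_\sigma(E)$ of semistable objects do not degenerate, and the exclusion of $\delta^\perp$ precisely prevents this via the support property. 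For the deck-transformation claim, any $\Phi \in Aut_0^\dagger(D^b(X))$ satisfies $Z \circ \Phi_*^{-1} = Z$ since $\Phi_*$ is trivial on cohomology, so $p(\Phi\cdot\sigma) = p(\sigma)$; conversely, two stability conditions with identical image must differ by an autoequivalence obtained from the chamber structure. Freeness follows because a nontrivial $\Phi$ fixing a stability condition would preserve stable objects of each phase setwise, yet must act faithfully on skyscraper sheaves.

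The main obstacle lies in extending beyond the geometric locus $U(X)$ to cover all of $\mathcal{P}_0^+(X)$: many points correspond to non-ample $\omega$ or do not directly arise from a tilt of $\mathrm{Coh}(X)$, and must be reached by precomposing with autoequivalences, especially spherical twists associated to $(-2)$-classes. The delicate part is checking that the wall-and-chamber decomposition of $\mathcal{P}^+(X)$ --- with walls exactly the $\delta^\perp$ that have been removed to form $\mathcal{P}_0^+(X)$ --- lifts to a compatible wall-and-chamber decomposition of $Stab^\dagger(X)$, so that crossing a wall in $\mathcal{P}_0^+(X)$ corresponds to applying a spherical twist on the stability-condition side and no extra sheets of the covering appear. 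This global consistency between the wall structure below and the sheet structure above is the heart of the argument.
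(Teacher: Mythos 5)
A preliminary but important observation: the paper does not prove this theorem at all --- it is quoted as a known result from Bridgeland's \emph{Stability conditions on K3 surfaces} (reference [90] in the bibliography), so there is no in-paper argument to compare yours against. What you have written is a reconstruction of the strategy of that cited reference, and at the level of overall architecture it is faithful: build the geometric chamber $V(X)$ by tilting $\mathrm{Coh}(X)$ at the slope threshold $B\cdot\omega$, saturate under the $\widetilde{GL^{+}(2,\mathbb{R})}$-action to get $U(X)$, let $Stab^{\dagger}(X)$ be the component containing it, and then establish surjectivity onto $\mathcal{P}^{+}_{0}(X)$, the covering property, and the identification of the deck group with $Aut^{\dagger}_{0}(D^{b}(X))$.

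Two of your steps, however, would not survive being made precise. First, the non-degeneracy of $Z_{(B,\omega)}$ on the tilted heart is \emph{not} equivalent to $\exp(B+i\omega)\notin\delta^{\perp}$: the failure mode is a spherical sheaf $E$ of positive rank and slope exactly $B\cdot\omega$ with $Z(E)\in\mathbb{R}_{\leq 0}$, so the correct condition is $\left\langle \exp(B+i\omega),\delta\right\rangle\notin\mathbb{R}_{\leq 0}$ for all $\delta\in\Delta^{+}(X)$. This is strictly stronger than avoiding the hyperplanes $\delta^{\perp}$, and it is precisely why the geometric chamber maps onto the proper subset $\mathcal{L}(X)$ rather than onto all of $\mathcal{Q}(X)\cap\mathcal{P}^{+}_{0}(X)$; conflating the two conditions hides the need for the subsequent extension step. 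Second, you cannot invoke unique path lifting to prove that $p(Stab^{\dagger}(X))=\mathcal{P}^{+}_{0}(X)$, since path lifting is a consequence of the covering property you are in the middle of establishing; the actual argument shows that the image is simultaneously open (because $p$ is a local homeomorphism) and closed in $\mathcal{P}^{+}_{0}(X)$ (via the bounded-mass and wall-and-chamber statements), and verifies the covering property separately. Finally, the identification of the deck group requires showing that each fibre of $p$ is a single $Aut^{\dagger}_{0}(D^{b}(X))$-orbit, which rests on the trichotomy for general points of $\partial U(X)$ (Theorem \ref{ux}) and on transporting an arbitrary stability condition back into $\overline{U(X)}$ by composites of spherical twists and shifts. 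You correctly identify this as the heart of the matter, but it remains an acknowledged gap rather than an argument.
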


The main difference in the case of Abelian surfaces is the absence of spherical objects. In fact there are no ill-behaved SCFTs on complex tori. For an Abelian surface A the Todd class is trivial thus the Mukai vector of an object $E\in D^{b}(A)$ is
\begin{eqnarray}
v(E)=(r(E),c_{1}(E),ch_{2}(E))\in\mathcal{N}(A)=H^{0}(A,\mathbb{Z})\oplus NS(A)\oplus H^{4}(A,\mathbb{Z}). \nonumber
\end{eqnarray} 
We define $\mathcal{P}^{+}(A)\subset \mathcal{N}(A)\otimes \mathbb{C}$ to be the component of the set of vectors which span positive-definite two-planes containing vectors of the form $exp(B+i\omega)$ with $B,\omega\in NS(A)\otimes\mathbb{R}$ and $\omega$ ample.
\begin{thm}~\cite{90}
Let A be an Abelian surface. Then there is a connected component $Stab^{\dagger}(A)\subset Stab(A)$ which is mapped by $p$ onto the open subset $\mathcal{P}^{+}(A)\subset\mathcal{N}(X)\otimes \mathbb{C}$, the induced map
\begin{eqnarray}
	p: Stab^{\dagger}(A)\longrightarrow \mathcal{P}^{+}(A)
\end{eqnarray}
is the universal cover, and the group of deck transformations is generated by the double shift-functor.
\end{thm}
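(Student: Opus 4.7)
The plan is to adapt the strategy used by Bridgeland for algebraic K3 surfaces (Theorem \ref{bridgeland2}) to the Abelian setting, exploiting the decisive simplification that $D^{b}(A)$ contains no spherical objects, so that the root system $\Delta(X)$ and the removal of hyperplanes $\delta^{\perp}$ disappear from the picture. Concretely, I would first construct an explicit family of stability conditions parametrised by $\mathcal{P}^{+}(A)$. For a pair $(B,\omega)\in NS(A)\otimes\mathbb{R}$ with $\omega$ ample, one tilts the heart $\mathrm{Coh}(A)$ with respect to the torsion pair defined by $\mu_{\omega}$-slope stability, using $B$ to shift the slope function, and defines
\begin{equation*}
Z_{B,\omega}(E)=\langle \exp(B+i\omega),v(E)\rangle,
\end{equation*}
where the Mukai vector on $A$ has trivial Todd correction. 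One then verifies that $Z_{B,\omega}$ restricted to the tilted heart $\mathcal{A}_{B,\omega}$ is a stability function with the Harder–Narasimhan property; this step uses Bogomolov–Gieseker and the fact that on an Abelian surface torsion-free $\mu_{\omega}$-semistable sheaves of positive rank satisfy $Z_{B,\omega}(E)\neq 0$. Extending by the free $\widetilde{GL^{+}(2,\mathbb{R})}$-action then yields a whole open set of stability conditions mapping to $\mathcal{P}^{+}(A)$ via $p$, and I would define $Stab^{\dagger}(A)$ to be the connected component containing them.

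Next I would show that $p:Stab^{\dagger}(A)\to\mathcal{P}^{+}(A)$ is surjective and a covering. Bridgeland's general theorem supplies that $p$ is a local homeomorphism onto an open subset of a linear subspace $V\subset\mathcal{N}(A)\otimes\mathbb{C}$, and the construction in the first step identifies that subspace with $\mathcal{N}(A)\otimes\mathbb{C}$ and places its image inside $\mathcal{P}^{+}(A)$. To upgrade the local homeomorphism to a covering map, I would follow Bridgeland's argument: given a path $\gamma:[0,1]\to\mathcal{P}^{+}(A)$ starting at $p(\sigma_{0})$, lift it by extending $\sigma_{0}$ as far as possible, and show the lift cannot escape in finite time by controlling the mass and phases of semistable factors through the generalised metric. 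The absence of spherical objects is exactly what keeps $Z$ away from zero along such a lift, since the only obstruction to prolongation in the K3 case came from $(-2)$-classes orthogonal to $\exp(B+i\omega)$; here $\mathcal{P}^{+}(A)$ itself (not a complement in it) is the target.

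Finally, I would identify the deck transformation group. An element of the deck group acts on $Stab^{\dagger}(A)$ and fixes the image in $\mathcal{P}^{+}(A)$, hence acts trivially on $\mathcal{N}(A)$ and by a cohomologically trivial autoequivalence of $D^{b}(A)$. On an Abelian surface the cohomologically trivial autoequivalences of $D^{b}(A)$ that preserve $Stab^{\dagger}(A)$ form a group generated by the translations of $A$, by tensoring with degree-zero line bundles, and by the shift functor $[1]$; the first two act trivially on any $\sigma_{B,\omega}$, so the induced deck group action collapses to the subgroup generated by $[1]$. Since the $\mathbb{C}$-action identifies the single shift $[1]$ with a continuous rotation, which is not a deck transformation (the universal cover of $\mathcal{P}^{+}(A)$ is obtained by taking the logarithm along the unit circle direction), one finds that only the double shift $[2]$ acts as a nontrivial deck transformation, and the group of deck transformations is $\mathbb{Z}\langle [2]\rangle$.

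The main obstacle is the covering-map step: verifying that the lifted path $\tilde\gamma$ cannot approach the boundary of the set of lifts, i.e.\ establishing completeness of $Stab^{\dagger}(A)$ over $\mathcal{P}^{+}(A)$ in a metric sense. One has to control both the phases $\phi^{\pm}_{\sigma}(E)$ and the masses $m_{\sigma}(E)$ of all semistable objects uniformly along the path, and show that under wall-crossing no semistable factor produces a vanishing $Z$. On a K3 surface this is precisely where $(-2)$-classes obstruct the argument and force the removal of $\bigcup\delta^{\perp}$; on $A$ one must give a positive argument (based on the Bogomolov inequality and positivity of $\omega$) that the central charge of any potentially destabilising object stays in the open upper half plane, which is the technical heart of Bridgeland's proof.
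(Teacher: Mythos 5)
The paper does not actually prove this theorem: it is quoted from Bridgeland's paper on stability conditions on K3 surfaces (\cite{90}), so there is no in-paper proof to compare against, and your proposal has to be measured against Bridgeland's own argument. Your first step (constructing $V(A)$ by tilting, with central charge $\langle \exp(B+i\omega),v(E)\rangle$ and trivial Todd correction, then saturating under the $\widetilde{GL^{+}(2,\mathbb{R})}$-action) is exactly right. Where you diverge is the covering step: you propose to rerun the path-lifting and mass-control machinery that Bridgeland needs for K3 surfaces, whereas the whole point of the Abelian case is that this can be bypassed. A boundary point of $U(A)$ would have to destabilise some $\mathcal{O}_{p}$ via a spherical bundle or a $(-2)$-curve (the trichotomy of Theorem \ref{ux}), and neither exists on an Abelian surface; hence $\partial U(A)=\emptyset$, $U(A)$ is open and closed, and $Stab^{\dagger}(A)=U(A)$ is homeomorphic to the product of the contractible tube domain over the ample cone with $\widetilde{GL^{+}(2,\mathbb{R})}$. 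This is simply connected, so $p$ is the universal cover, and the deck group is $\pi_{1}(\mathcal{P}^{+}(A))\cong\mathbb{Z}$, whose generator (the loop from the $\mathbb{C}^{*}$-action) visibly lifts to the double shift. Your heavier route would presumably also close, and you correctly identify its technical heart, but note that proving $p$ is a covering is not by itself the statement ``universal cover'': you still need simple connectedness of $Stab^{\dagger}(A)$, which the product description gives for free and which your sketch never establishes.

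There are also two soft spots in your deck-group step. First, a deck transformation of a covering is a priori only a homeomorphism over the base; you assume without justification that it is induced by an autoequivalence of $D^{b}(A)$. Second, your list of cohomologically trivial autoequivalences is internally inconsistent: $[1]$ acts by $-\mathrm{id}$ on $\mathcal{N}(A)$, so it is not cohomologically trivial and does not fix points of $\mathcal{P}^{+}(A)$ (it sends $\exp(B+i\omega)$ to its negative); only $[2]$ belongs in that group. You reach the correct conclusion, but the clean argument is the $\pi_{1}$ computation above, which requires no classification of autoequivalences of $D^{b}(A)$ at all.
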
 

The fundamental group $\pi_{1}(\mathcal{P}^{+}(A))\cong \mathbb{Z}$ is generated by the loop induced by the $\mathbb{C}^{*}$ action on $\mathcal{P}(A)$.\\

We give an example of a stability condition on an algebraic K3 or an Abelian surface. For this we have to introduce a little more machinery. The standard t-structure of the derived category of coherent sheaves of a smooth projective variety has as its heart the Abelian category of coherent sheaves. For a K3 surface slope stability with this t-structure defines no stability condition since the stability function for any sheaf supported in dimension zero vanishes. The next simplest choice is the t-structure obtained by tilting ~\cite{130}. For details see ~\cite{80}.
\begin{defn}
A \textit{torsion pair} in an Abelian category $\mathcal{A}$ is a pair of full subcategories $(\mathcal{T}, \mathcal{F})$ satisfying
\begin{enumerate}
\item $Hom_{\mathcal{A}}(T,F)=0$ for all $T\in\mathcal{T}$ and $F\in\mathcal{F}$;
\item every object $E\in\mathcal{A}$ fits into a short exact sequence
\begin{eqnarray}
0\longrightarrow T\longrightarrow E\longrightarrow F\longrightarrow 0 \nonumber
\end{eqnarray}
for some pair of objects $T\in\mathcal{T}$ and $F\in\mathcal{F}$.
\end{enumerate} 
\end{defn}  
Then we have the following
\begin{prop} ~\cite{130}
Let $\mathcal{A}$ be the heart of a bounded t-structure on a triangulated category $\mathcal{D}$. Denote by $H^{i}(E)\in\mathcal{A}$ the i-th cohomology object of E with respect to this t-structure. Let $(\mathcal{T}, \mathcal{F})$ be a torsion pair in $\mathcal{A}$. Then the full subcategory
\begin{eqnarray}
\mathcal{A}^{*}=\left\{E\in \mathcal{D}| H^{i}(E)=0 \text{ for } i\notin \lbrace-1,0\rbrace, H^{-1}(E)\in\mathcal{F}, H^{0}(E)\in \mathcal{T}\right\} \nonumber
\end{eqnarray}
is the heart of a bounded t-structure on $\mathcal{D}$. 
\end{prop}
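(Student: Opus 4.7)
The plan is to produce an explicit candidate pair of subcategories, verify that they form a bounded t-structure, and check that the resulting heart is exactly the $\mathcal{A}^{*}$ described in the statement.

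First I would define
\begin{eqnarray}
\mathcal{D}^{*,\leq 0} &=& \{E\in\mathcal{D} : H^{i}(E)=0 \text{ for } i>0,\ H^{0}(E)\in\mathcal{T}\}, \nonumber \\
\mathcal{D}^{*,\geq 0} &=& \{E\in\mathcal{D} : H^{i}(E)=0 \text{ for } i<-1,\ H^{-1}(E)\in\mathcal{F}\}, \nonumber
\end{eqnarray}
where $H^{i}$ denotes cohomology for the given $t$-structure. The inclusions $\mathcal{D}^{*,\leq 0}\subset \mathcal{D}^{*,\leq 0}[-1]$ and $\mathcal{D}^{*,\geq 0}[-1]\subset \mathcal{D}^{*,\geq 0}$ are routine by shifting indices of $H^{i}$, and the equality $\mathcal{D}^{*,\leq 0}\cap \mathcal{D}^{*,\geq 0}=\mathcal{A}^{*}$ is immediate from the definitions.

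Next I would establish Hom vanishing. Take $X\in\mathcal{D}^{*,\leq 0}$ and $Y\in\mathcal{D}^{*,\geq 1}$, so that $Y$ lies in $\mathcal{D}^{\geq 0}$ with $H^{0}(Y)\in\mathcal{F}$. Apply $\mathrm{Hom}(-,Y)$ to the standard truncation triangle $\tau^{\leq -1}X\to X\to H^{0}(X)[0]$ provided by the original $t$-structure. The term $\mathrm{Hom}(\tau^{\leq -1}X,Y)$ vanishes by the Hom-vanishing axiom of the original $t$-structure. For the term $\mathrm{Hom}(H^{0}(X)[0],Y)$, applying $\mathrm{Hom}(H^{0}(X),-)$ to $H^{0}(Y)[0]\to Y\to \tau^{\geq 1}Y$ reduces us to $\mathrm{Hom}_{\mathcal{A}}(H^{0}(X),H^{0}(Y))$ with $H^{0}(X)\in\mathcal{T}$ and $H^{0}(Y)\in\mathcal{F}$, which vanishes by the defining property of the torsion pair. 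Hence $\mathrm{Hom}(X,Y)=0$.

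The main obstacle, as usual for tilting, is the construction of truncation triangles. Given $E\in\mathcal{D}$, consider the original triangle $\tau^{\leq -1}E\to E\to \tau^{\geq 0}E$ and the decomposition $0\to T\to H^{0}(E)\to F\to 0$ of $H^{0}(E)\in\mathcal{A}$ given by the torsion pair. The map $\tau^{\geq 0}E\to H^{0}(E)[0]\to F[0]$ composes with $E\to\tau^{\geq 0}E$ to yield a morphism $E\to F[0]$; let $A$ be its fibre, so there is a distinguished triangle $A\to E\to F[0]\to A[1]$. Applying the octahedral axiom to the composition $\tau^{\leq -1}E\to E\to F[0]$ (and using the sequence $T\to H^{0}(E)\to F$ inside $\mathcal{A}$) identifies the cohomology objects of $A$: one finds $H^{i}(A)=H^{i}(E)$ for $i\leq -1$, $H^{0}(A)=T\in\mathcal{T}$, and $H^{i}(A)=0$ for $i>0$, so $A\in\mathcal{D}^{*,\leq 0}$. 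Similarly, setting $B=F[0]$ (or rather realising that cone completion of $E\to F[0]$ lies in $\mathcal{D}^{*,\geq 1}$ since $F\in\mathcal{F}$ sits in degree $0$, hence $F[0]\in \mathcal{D}^{*,\geq 1}$ after shifting), one checks $B$ has $H^{i}(B)=0$ for $i\neq 0$ and $H^{0}(B)=F\in\mathcal{F}$, giving $B\in\mathcal{D}^{*,\geq 1}$.

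Finally I would note boundedness: since the original $t$-structure is bounded and the new cohomology functors $H^{i}_{*}$ vanish on $E$ whenever the original $H^{i}(E)$ vanish in a suitable range (roughly, $H^{i}_{*}(E)=0$ once $H^{i-1}(E)=H^{i}(E)=0$), every object lies in $\mathcal{D}^{*,\leq n}\cap\mathcal{D}^{*,\geq -n}$ for some $n$. Combining the three axioms yields the bounded $t$-structure whose heart is $\mathcal{A}^{*}$. The delicate point throughout is handling the octahedral diagram that certifies $A\in\mathcal{D}^{*,\leq 0}$; this is where one uses in an essential way that $(\mathcal{T},\mathcal{F})$ is a torsion pair rather than merely an orthogonal pair.
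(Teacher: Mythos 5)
The paper itself offers no proof of this proposition --- it is quoted from Happel--Reiten--Smal\o\ --- so your argument has to stand on its own. Your choice of aisles, the Hom-vanishing argument via the torsion pair, and the boundedness remark are all correct. The genuine gap is in the construction of the truncation triangle for a \emph{general} object $E$. From the triangle $A\to E\to F[0]\to A[1]$ the long exact sequence of old cohomology gives $H^{i}(A)\cong H^{i}(E)$ for every $i\neq 0$: for $i\geq 2$ this is immediate, and for $i=1$ the surjectivity of $H^{0}(E)\to F$ forces the connecting map $F\to H^{1}(A)$ to vanish, so $H^{1}(A)\cong H^{1}(E)$ as well. Hence your claim that $H^{i}(A)=0$ for $i>0$ is false unless $E$ already lies in $\mathcal{D}^{\leq 0}$ for the original t-structure. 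Concretely, for $E=M[-1]$ with $0\neq M\in\mathcal{A}$ one has $H^{0}(E)=0$, so your recipe returns $F=0$ and $A=E$, which has $H^{1}(A)=M\neq 0$ and is therefore not in $\mathcal{D}^{*,\leq 0}$; the required triangle here is $0\to E\to E$, since $M[-1]\in\mathcal{D}^{*,\geq 1}$. So as written, axiom (3) is only verified for $E\in\mathcal{D}^{\leq 0}$.

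The missing step is the reduction to that special case. Either (i) first split a general $E$ by the old truncation $\tau^{\leq 0}E\to E\to\tau^{\geq 1}E$, apply your construction to $\tau^{\leq 0}E$, and reassemble with the octahedral axiom, using that $\tau^{\geq 1}E\in\mathcal{D}^{\geq 1}\subset\mathcal{D}^{*,\geq 1}$ and that $\mathcal{D}^{*,\geq 1}$ is closed under extensions --- the latter requires the small auxiliary fact, derived from the two torsion-pair axioms, that $\mathcal{F}=\mathcal{T}^{\perp}$ is closed under subobjects and extensions; or (ii) bypass truncation triangles altogether and invoke the criterion that a full additive subcategory is the heart of a bounded t-structure if and only if $\mathrm{Hom}(A[j],B[k])=0$ for $j>k$ and every object admits a finite tower of triangles with factors in shifts of the subcategory, producing that tower by refining the old filtration of $E$ by the $H^{i}(E)[-i]$ through the torsion decomposition of each $H^{i}(E)$. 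Either route closes the gap; without one of them the proof is incomplete precisely at the point you yourself flag as delicate.
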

We say $\mathcal{A}^{*}$ is obtained from $\mathcal{A}$ by \textit{tilting} with respect to the torsion pair $(\mathcal{T}, \mathcal{F})$.\\

Let $\omega\in NS(X)\otimes\mathbb{R}$ be an element of the ample cone Amp(X) of an Abelian or an algebraic K3 surface X. We define the slope $\mu_{\omega}(E)$ of a torsion-free sheaf E on X to be
\begin{eqnarray}
	\mu_{\omega}(E)=\frac{c_{1}(E)\cdot\omega}{r(E)}. \nonumber
\end{eqnarray}

Let $\mathcal{T}$ be the category consisting of sheaves whose torsion-free part have $\mu_{\omega}$-semistable Harder-Narasimhan factors with $\mu_{\omega}>B\cdot\omega$ and $\mathcal{F}$ the category consisting of torsion-free sheaves with $\mu_{\omega}$-semistable Harder-Narasimhan factors with $\mu_{\omega}\leq B\cdot\omega$. $(\mathcal{T},\mathcal{F})$ defines a torsion pair. Tilting with respect to this torsion pair gives a bounded t-structure on $D^{b}(X)$ with heart $\mathcal{A}(B,\omega)$ that depends on $B\cdot\omega$. As stability function on this heart we choose
\begin{eqnarray}
\label{string}
	Z_{(B,\omega)}(E)=(exp(B+i\omega),v(E)).
\end{eqnarray}
Note that the central charge ($\ref{string}$) is of the form guessed by physicists  by mirror symmetry arguments. For a Calabi-Yau threefold we expect quantum corrections for this central charge ~\cite{150}.   
\begin{prop}~\cite{90}
\label{example}
The pair $(Z_{(B,\omega)},\mathcal{A}(B,\omega))$ defines a stability condition if for all spherical sheaves E on X one has $Z(E)\notin\mathbb{R}_{\leq0}$. In particular, this holds whenever $\omega^{2}>2$.
\end{prop}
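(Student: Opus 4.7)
The plan is to verify the criterion stated earlier, namely that giving a stability condition on $D^{b}(X)$ is equivalent to giving a bounded t-structure together with a stability function on its heart satisfying the Harder-Narasimhan property. The bounded t-structure with heart $\mathcal{A}(B,\omega)$ is already constructed via tilting of the torsion pair $(\mathcal{T},\mathcal{F})$, so the real work is to check (i) that $Z_{(B,\omega)}$ is a stability function on $\mathcal{A}(B,\omega)$ and (ii) that it has the HN property.

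For (i), I would analyze $Z_{(B,\omega)}(E)=\langle\exp(B+i\omega),v(E)\rangle$ by decomposing $E\in\mathcal{A}(B,\omega)$ via the defining triangle $F[1]\to E\to T$ with $T\in\mathcal{T}$, $F\in\mathcal{F}$, and further via Harder-Narasimhan filtrations with respect to $\mu_{\omega}$-slope. A direct computation shows
\begin{eqnarray}
\mathrm{Im}\,Z_{(B,\omega)}(E)=\omega\cdot c_{1}(E)-r(E)\,B\cdot\omega=r(E)\bigl(\mu_{\omega}(E)-B\cdot\omega\bigr) \nonumber
\end{eqnarray}
in the torsion-free case, while for pure-dimensional torsion the sign is built into the effectivity of $c_{1}$ (dimension one) or the Euler characteristic (dimension zero). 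So each HN factor of $T$ lands in the strict upper half plane and each HN factor of $F[1]$ either lands in the strict upper half plane (if $\mu_{\omega}<B\cdot\omega$) or on the real axis (the borderline case $\mu_{\omega}=B\cdot\omega$). The borderline case is the heart of the matter: here I must show $\mathrm{Re}\,Z_{(B,\omega)}(F[1])<0$, equivalently $\mathrm{Re}\,Z_{(B,\omega)}(F)>0$, for every $\mu_{\omega}$-semistable torsion-free $F$ with $\mu_{\omega}(F)=B\cdot\omega$. Writing $\xi=c_{1}(F)-r(F)B$ so that $\xi\cdot\omega=0$, an explicit computation gives
\begin{eqnarray}
\mathrm{Re}\,Z_{(B,\omega)}(F)=\frac{1}{2r(F)}\bigl(r(F)^{2}\omega^{2}-\xi^{2}-v(F)^{2}-2\bigr). \nonumber
\end{eqnarray}
The Bogomolov inequality gives $v(F)^{2}\geq -2$, the Hodge index theorem gives $\xi^{2}\leq 0$ because $\xi\cdot\omega=0$, and strict positivity fails only when $v(F)^{2}=-2$, i.e.\ when $F$ is spherical. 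The hypothesis that $Z(E)\notin\mathbb{R}_{\leq 0}$ for every spherical sheaf is precisely what rules this out.

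For (ii), I would invoke the standard HN argument for tilted hearts on a smooth projective surface: $\mathcal{A}(B,\omega)$ is Noetherian, the image of $\mathrm{Im}\,Z_{(B,\omega)}$ on objects of the heart is discrete in $\mathbb{R}_{\geq 0}$ (it is contained in $\omega\cdot\mathrm{NS}(X)-\mathbb{Z}\cdot B\cdot\omega$, which lies in a fixed lattice), and the kernel of $\mathrm{Im}\,Z_{(B,\omega)}$ consists of objects whose $Z$-values lie on the negative real axis. Combined with Bridgeland's general existence criterion for HN filtrations on a Noetherian heart with discrete stability function, this delivers the HN property.

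For the final assertion, I would observe that for a spherical sheaf $E$ the Mukai vector satisfies $v(E)^{2}=-2$. If $Z_{(B,\omega)}(E)$ were real, then $c_{1}(E)\cdot\omega=r(E)B\cdot\omega$, and the same Hodge-index/Bogomolov manipulation as above yields
\begin{eqnarray}
\mathrm{Re}\,Z_{(B,\omega)}(E)=\frac{1}{2r(E)}\bigl(r(E)^{2}\omega^{2}-\xi^{2}-2+2\bigr)\geq \frac{r(E)\omega^{2}}{2}>0 \nonumber
\end{eqnarray}
whenever $r(E)\geq 1$ and $\omega^{2}>2$, with the case $r(E)=0$ excluded because a sheaf with $r=0$ and $c_{1}\cdot\omega=0$ for $\omega$ ample must be supported in dimension zero (giving $\mathrm{Re}\,Z<0$ and $v^{2}=0$, incompatible with spherical). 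I expect the main obstacle to be the borderline Bogomolov/Hodge-index computation in step (i): keeping track of the twisted Chern character and signs for sheaves whose torsion-free part is only semistable (not stable) requires some care, since one has to argue factor-by-factor along the HN filtration and then reassemble.
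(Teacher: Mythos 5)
The paper offers no proof of this statement: it is quoted verbatim from Bridgeland's paper on K3 surfaces (Lemma 6.2 there), so your proposal is being compared with Bridgeland's original argument rather than with anything in the text. Your architecture is exactly his: tilting produces the heart, the stability-function axiom reduces to $\mu_{\omega}$-semistable torsion-free sheaves $F$ with $\mu_{\omega}(F)=B\cdot\omega$, one controls $\mathrm{Re}\,Z$ there by Hodge index together with a lower bound on the Mukai square, and the Harder--Narasimhan property is treated separately.

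There is, however, a sign error in your central formula which inverts the logic. With $\xi=c_{1}(F)-r(F)B$ and $\xi\cdot\omega=0$, the correct identity is
\begin{eqnarray}
\mathrm{Re}\,Z_{(B,\omega)}(F)=\frac{1}{2r(F)}\left(r(F)^{2}\omega^{2}-\xi^{2}+v(F)^{2}\right), \nonumber
\end{eqnarray}
not $\frac{1}{2r(F)}\bigl(r(F)^{2}\omega^{2}-\xi^{2}-v(F)^{2}-2\bigr)$. In your version the term $-v(F)^{2}-2$ vanishes exactly when $v(F)^{2}=-2$, so spherical sheaves would be the \emph{harmless} case and positivity would instead be threatened by sheaves with large $v(F)^{2}$ --- the opposite of what you go on to assert and of what the proposition requires. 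With the correct formula, $\xi^{2}\leq 0$ and $v(F)^{2}\geq -2$ give $\mathrm{Re}\,Z\geq\frac{1}{2r}(r^{2}\omega^{2}-2)$, which is positive for every $r\geq 1$ precisely when $\omega^{2}>2$; this is where the threshold $2$ comes from. A symptom of the error is visible in your final display: your bound $\mathrm{Re}\,Z\geq\frac{r\omega^{2}}{2}>0$ uses only $\omega^{2}>0$, so the hypothesis $\omega^{2}>2$ never enters. Two secondary points: the inequality $v(F)^{2}\geq -2$ for $\mu_{\omega}$-stable $F$ is not the Bogomolov inequality but the consequence of $-v(F)^{2}=\chi(F,F)=2\hom(F,F)-\mathrm{ext}^{1}(F,F)\leq 2$ for a simple sheaf (with equality exactly in the spherical case), applied factor-by-factor in the semistable case as you indicate; and your discreteness argument for the HN property is valid only for rational $B$ and $\omega$, the general case requiring Bridgeland's separate limiting argument.
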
 

We denote the set of all stability conditions arising in this way by $V(X)$. We denote by $\Delta^{+}(X)\subset \Delta(X)$ elements $\delta\in\Delta(X)$ with $r(\delta)>0$.  We define the following subset of $\mathcal{Q}(X)$
\begin{eqnarray}
\mathcal{L}(X)=\left\{\Omega=exp(B+i\omega)\in\mathcal{Q}(X)|\omega\in Amp(X), \left\langle \Omega,\delta\right\rangle\notin \mathbb{R}_{\leq 0}, \forall \delta\in \Delta^{+}(X)\right\}.  \nonumber
\end{eqnarray} 
The map $p$ restricts to a homeomorphism ~\cite{90} 
\begin{eqnarray}
\label{homeo}
p:V(X)\longrightarrow\mathcal{L}(X). \nonumber
\end{eqnarray} 

We use the free action of $\widetilde{GL^{+}(2,\mathbb{R})}$ on $V(X)$ to introduce $U(X):=V(X)\cdot\widetilde{GL^{+}(2,\mathbb{R})}$. The connected component $Stab^{\dagger}(X)$ is the unique one containing $U(X)$. $U(X)$ can be described as the stability conditions in $Stab^{\dagger}(X)$ for which all skycraper sheaves $\mathcal{O}_{p}$ are stable of the same phase ~\cite{80}. Since we have no spherical objects on an Abelian surface A in this case we have $Stab^{\dagger}(A)=U(A)$. \\

We say a set of objects $S\subset D^{b}(X)$ has bounded mass in a connected component $Stab^{*}(X)\subset Stab(X)$ if $sup\left\{m_{\sigma}(E)|E\in S\right\}<\infty$ for some point $\sigma\in Stab^{*}(X)$. This implies that the set of Mukai vectors  $\left\{v(E)|E\in S\right\}$ is finite. We have a wall-and-chamber structure:
\begin{prop}\cite{90}
Suppose that the subset $S\subset D^{b}(X)$ has bounded mass in $Stab^{*}(X)$ and fix a compact subset $B\subset Stab^{*}(X)$. Then there is a finite collection $\left\{W_{\gamma}|\gamma\in\Gamma\right\}$ of real codimension-one submanifolds of $Stab^{*}(X)$ such that any component
\begin{eqnarray}
C\subset B\backslash\bigcup_{\gamma\in\Gamma}W_{\gamma} \nonumber
\end{eqnarray}
has the following property: if $E\in S$ is $\sigma-$semistable for $\sigma\in C$, then E is $\sigma$-semistable for all $\sigma\in C$. Moreover, if $E\in S$ has primitive Mukai vector, then E is $\sigma$-stable for all $\sigma\in C$. 
\end{prop}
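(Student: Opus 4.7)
My plan is to produce the wall-and-chamber structure by first cutting down to finitely many relevant Mukai vectors, then defining walls as hypersurfaces where numerical phases of potential destabilizers coincide, and finally using compactness of $B$ to reduce to a finite collection.

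First, I would establish the following finiteness. Since the generalized metric on $Stab^{*}(X)$ controls $|\log(m_{\sigma_{2}}(E)/m_{\sigma_{1}}(E))|$, the bounded-mass hypothesis at one point $\sigma_{0}$ propagates to bounded mass on any compact neighborhood, and in particular on $B$. Because the numerical stability conditions factor through $\mathcal{N}(X)$ and the central charge is a locally injective linear functional on $\mathcal{N}(X)\otimes\mathbb{C}$, the set of Mukai vectors $\Gamma_{0}:=\{v(E)\mid E\in S\}$ is finite: otherwise one could produce an infinite sequence of Mukai vectors on which $|Z_{\sigma_{0}}|$ is bounded, contradicting the discreteness of $\mathcal{N}(X)$ inside its tensor with $\mathbb{C}$ once combined with bounded mass.

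Second, for each $v\in\Gamma_{0}$ I would enumerate the finitely many Mukai vectors $w\in\mathcal{N}(X)$ that could appear as a Jordan--Hölder or Harder--Narasimhan constituent of some $E\in S$ with $v(E)=v$, on points of $B$. Concretely, any such $w$ satisfies $m_{\sigma}(w)\le m_{\sigma}(v)$ for every $\sigma$ at which $E$ is semistable, and Bogomolov-type inequalities together with the bounded-mass estimate on $B$ confine $w$ to a finite subset $\Gamma(v)\subset\mathcal{N}(X)$. For each pair $(v,w)$ with $w\in\Gamma(v)$ and $w$ not a positive rational multiple of $v$, I then define
\begin{equation}
W(v,w)\;=\;\bigl\{\sigma\in Stab^{*}(X)\;\bigm|\;Z_{\sigma}(w)\in\mathbb{R}_{>0}\cdot Z_{\sigma}(v)\bigr\}.\nonumber
\end{equation}
Via the local homeomorphism $p:Stab^{*}(X)\to\mathcal{N}(X)\otimes\mathbb{C}$ this is cut out by the single real equation $\mathrm{Im}(Z_{\sigma}(w)\overline{Z_{\sigma}(v)})=0$ away from the locus where one of $Z(v),Z(w)$ vanishes, and hence is a real codimension-one submanifold (with the vanishing loci being themselves of higher codimension, harmlessly absorbed into the wall collection).

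Third, I would show that the collection $\Gamma=\{(v,w)\mid v\in\Gamma_{0},w\in\Gamma(v)\}$, restricted to those pairs whose wall meets $B$, is finite, and that stability is constant on each component $C$ of $B\setminus\bigcup W_{\gamma}$. Indeed, if $E\in S$ is $\sigma$-semistable and becomes strictly unstable at $\sigma'\in C$, join $\sigma$ and $\sigma'$ by a path in $C$ and consider the first destabilizing subobject $F\hookrightarrow E$ in the sense of the tilted hearts; its Mukai vector $w=v(F)$ lies in $\Gamma(v)$, and destabilization requires the relative phase $\phi_{\sigma_{t}}(F)-\phi_{\sigma_{t}}(E)$ to change sign along the path, forcing a crossing of $W(v,w)$, contradicting $C\cap W_{\gamma}=\emptyset$. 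For the last sentence of the proposition, if $v(E)$ is primitive and $\sigma\in C$ renders $E$ strictly semistable but not stable, a Jordan--Hölder factor $F$ of $E$ of the same phase has $v(F)$ parallel to $v(E)$ in $\mathcal{N}(X)\otimes\mathbb{R}$; primitivity forces $v(F)=nv(E)$ with $n\in\mathbb{Z}_{>0}$, hence $n=1$, and then $F$ is itself an $E$-type object, iteratively contradicting proper containment.

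The main obstacle is the middle step: controlling the set $\Gamma(v)$ of potential destabilizing Mukai vectors and proving that only finitely many of the associated walls meet $B$. This requires using both the bounded-mass hypothesis on the compact set $B$ (to bound $|Z_{\sigma}(w)|$ uniformly) and the discreteness of the numerical lattice $\mathcal{N}(X)$, combined with the fact that only vectors with $0<\mathrm{Im}\,Z_{\sigma}(w)\le\mathrm{Im}\,Z_{\sigma}(v)$ can appear as subobjects in the tilted heart. Once this finiteness is in hand, the wall-crossing formalism and the primitive-class refinement are essentially formal consequences of the local homeomorphism from $Stab^{*}(X)$ to $\mathcal{N}(X)\otimes\mathbb{C}$.
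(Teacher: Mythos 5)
The paper does not prove this proposition: it is quoted verbatim from Bridgeland's \emph{Stability conditions on K3 surfaces} \cite{90} (Proposition 9.3 there) and used as a black box, so there is no in-paper proof to compare against. Measured against Bridgeland's actual argument, your overall architecture is the right one --- finiteness of the relevant Mukai vectors, walls $W(v,w)$ defined by $Z_{\sigma}(w)\in\mathbb{R}_{>0}\cdot Z_{\sigma}(v)$, a path-crossing argument for constancy of semistability on chambers, and the proportionality-plus-primitivity argument for stability --- but there is a genuine gap in your first and second steps, which you yourself flag as the main obstacle.

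The gap is the finiteness claim. You assert that boundedness of $|Z_{\sigma_{0}}(v)|$ together with ``the discreteness of $\mathcal{N}(X)$'' forces $\{v(E)\mid E\in S\}$ (and likewise each $\Gamma(v)$) to be finite. This is false as stated: $Z_{\sigma_{0}}$ is a single $\mathbb{C}$-valued linear functional on $\mathcal{N}(X)\otimes\mathbb{R}$, which has rank $\rho(X)+2\geq 3$, so $\ker Z_{\sigma_{0}}$ is a real subspace of codimension two containing infinitely many lattice points; the set $\{v\in\mathcal{N}(X)\,:\,|Z_{\sigma_{0}}(v)|\leq M\}$ is therefore infinite, and no amount of discreteness rescues the argument. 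The ingredient you are missing is the signature of the Mukai pairing: $\mathcal{N}(X)$ has signature $(2,\rho)$, the real and imaginary parts of $p(\sigma)$ span a \emph{positive-definite} two-plane, so the kernel of $Z_{\sigma}$ is negative definite; and every $\sigma$-semistable object $A$ satisfies $\langle v(A),v(A)\rangle\geq -2$ (from $\mathrm{Hom}(A,A)\neq 0$ and Serre duality). Writing $v=v_{P}+v_{P^{\perp}}$ with respect to the plane $P$ spanned by $p(\sigma)$, the mass bound controls $v_{P}$, and then $v_{P^{\perp}}^{2}\geq -2-v_{P}^{2}$ with $P^{\perp}$ negative definite controls $v_{P^{\perp}}$, confining $v$ to a bounded, hence finite, subset of the lattice; compactness of $B$ makes this uniform. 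Your passing mention of ``Bogomolov-type inequalities'' for the constituents gestures at this, but it must carry the entire finiteness step, including for $\Gamma_{0}$ itself, and it must be applied to the semistable (HN/JH) factors, not directly to $E$. Secondary, smaller issues: at a general point of $Stab^{*}(X)$ there is no ``tilted heart'' in which to find your destabilizing subobject $F$ --- one argues instead with the HN filtrations of $E$ at nearby points and the deformation of semistable factors --- and in the primitivity step the clean conclusion is that $v(E)=\sum_{i}n_{i}v(E)$ over the JH factors forces a single factor, rather than an iterative containment contradiction.
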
     
Using this result Bridgeland proved the following theorem for the boundary $\partial U(X)$ of the open subset $U(X)$ that is contained in a locally finite union of codimension-one real submanifolds of Stab(X):
\begin{thm}\cite{90}
\label{ux}
Suppose that $\sigma\in \partial U(X)$ is a general point of the boundary of U(X), i.e. it lies on only one codimension-one submanifold of $Stab(X)$. Then exactly one of the following possibilities holds:
\begin{enumerate}
\item There is a rank r spherical vector bundle A such that the only $\sigma$-stable factors of the objects $\left\{\mathcal{O}_{p}|p\in X\right\}$ are A und $T_{A}(\mathcal{O}_{p})$. Thus the Jordan-Holder filtration of each $\mathcal{O}_{p}$ is given by
\begin{eqnarray}
0\longrightarrow A^{\oplus r}\longrightarrow\mathcal{O}_{p}\longrightarrow T_{A}(\mathcal{O}_{p})\longrightarrow 0. \nonumber
\end{eqnarray}
\item There is a rank r spherical vector bundle A such that the only $\sigma$-stable factors of the objects $\left\{\mathcal{O}_{p}|p\in X\right\}$ are $A\left[2\right]$ and $T_{A}^{-1}(\mathcal{O}_{p})$. Thus the Jordan-Holder filtration of each $\mathcal{O}_{p}$ is given by
\begin{eqnarray}
0\longrightarrow T_{A}^{-1}(\mathcal{O}_{p})\longrightarrow\mathcal{O}_{p}\longrightarrow A^{\oplus r}\left[2\right]\longrightarrow 0. \nonumber
\end{eqnarray}
\item There are a nonsingular rational curve $C\subset X$ and an integer k such that $\mathcal{O}_{p}$ is $\sigma$-stable for $p\notin C$ and such that the Jordan-Holder filtration of $\mathcal{O}_{p}$ for $p\in C$ is
 \begin{eqnarray}
0\longrightarrow \mathcal{O}_{C}(k+1)\longrightarrow\mathcal{O}_{p}\longrightarrow \mathcal{O}_{C}(k)\left[1\right]\longrightarrow 0. \nonumber
\end{eqnarray}
\end{enumerate} 
\end{thm}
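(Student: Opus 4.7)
The plan is to exploit the characterization of $U(X)$ (stated in the paragraph preceding the theorem) as the set of stability conditions in $Stab^{\dagger}(X)$ for which every skyscraper sheaf $\mathcal{O}_p$ is $\sigma$-stable and all the $\mathcal{O}_p$ have a common phase. On the boundary $\partial U(X)$ this property must fail, so at least one $\mathcal{O}_p$ becomes strictly $\sigma$-semistable. Since every skyscraper shares the primitive Mukai vector $(0,0,1)$, the set $S=\{\mathcal{O}_p\mid p\in X\}$ has bounded mass, so the wall-and-chamber proposition stated just before Theorem \ref{ux} applies: a general $\sigma\in\partial U(X)$ lies on exactly one real codimension-one submanifold $W$, and across $W$ the $\sigma$-(semi)stability type of the family $\{\mathcal{O}_p\}$ changes in a single controlled way.

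First I would study the Jordan-Holder filtration of a strictly $\sigma$-semistable $\mathcal{O}_p$. All the stable factors $A_{p,i}$ must share the phase $\phi^+_{\sigma}(\mathcal{O}_p)=\phi^-_{\sigma}(\mathcal{O}_p)$, and their Mukai vectors satisfy $\sum_i v(A_{p,i})=(0,0,1)$. The locus $D=\{p\in X\mid \mathcal{O}_p\text{ is strictly }\sigma\text{-semistable}\}$ is closed by upper semicontinuity of $\phi^\pm_\sigma$ on families, and since the bounded-mass/wall proposition forbids a two-parameter family of destabilizations at a codimension-one wall, either $D=X$ or $\dim D\le 1$.

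Next comes the case analysis. If $D=X$, then some stable factor $A$ appears in the Jordan-Holder filtration for every $p\in X$, with a $p$-independent Mukai vector. The assignment $p\mapsto A$ has no moduli over $X$, so $A$ is rigid; on a K3 surface a rigid simple object is spherical, and by tensoring with a line bundle and shifting one may arrange $A$ to be a spherical vector bundle. The complementary factor must be a one-parameter family of stable objects of Mukai vector $(0,0,1)-r\,v(A)$, and the spherical-twist short exact sequences
\begin{eqnarray}
0\longrightarrow A^{\oplus r}\longrightarrow \mathcal{O}_p\longrightarrow T_A(\mathcal{O}_p)\longrightarrow 0,\qquad 0\longrightarrow T_A^{-1}(\mathcal{O}_p)\longrightarrow \mathcal{O}_p\longrightarrow A^{\oplus r}[2]\longrightarrow 0, \nonumber
\end{eqnarray}
are the only possibilities compatible with the common-phase condition in $\mathcal{P}(\phi)$, yielding cases (1) and (2) according to whether $A$ sits in degree $0$ or $A[2]$ does.

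If instead $\dim D=1$ then the destabilizing factors form a nontrivial but proper family over $X$. A dimension count using the codimension-one hypothesis on $W$, together with the fact that the generic stable factor on $D$ must have vanishing $\mathrm{Ext}^1$ into itself except along $D$, forces $D$ to be a smooth rational curve $C$, and the stable factors to be twists $\mathcal{O}_C(k+1)$ and $\mathcal{O}_C(k)[1]$ whose Mukai vectors sum to $(0,0,1)$, giving case (3). The main obstacle is this last step: one must show rigorously that no other configuration — longer Jordan-Holder filtrations with three or more factors, positive-genus curves, or non-reduced supports — can survive the codimension-one hypothesis, which requires combining the Mukai-vector arithmetic with the characterization of rigid objects on a K3 and the semi-continuity of stability in families on $X\times\mathrm{Stab}(X)$.
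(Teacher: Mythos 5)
The paper does not prove this statement: Theorem \ref{ux} is quoted verbatim from Bridgeland's \emph{Stability conditions on K3 surfaces} \cite{90} (Theorem 12.1 there), and the surrounding text only records that Bridgeland deduced it from the wall-and-chamber proposition. So there is no in-paper argument to compare yours against; what can be assessed is whether your sketch would actually reconstruct Bridgeland's proof, and as it stands it has genuine gaps.

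The central one is the dichotomy you draw between $D=X$ and $\dim D\le 1$. The wall-and-chamber proposition constrains how stability varies as $\sigma$ moves in $Stab(X)$; it says nothing about the dimension of the locus of points $p\in X$ at which $\mathcal{O}_p$ is destabilized for a \emph{fixed} $\sigma$, so the claim that it ``forbids a two-parameter family of destabilizations'' does not follow. Bridgeland's actual dichotomy is on the destabilizing stable factor $A$ of the same phase as $\mathcal{O}_p$: since $A$ is stable, $\mathrm{Hom}(A,A)=\mathbb{C}$ and Riemann--Roch gives $\left\langle v(A),v(A)\right\rangle=-2+2\dim\mathrm{Ext}^{1}(A,A)\ge -2$, so either $A$ is rigid and spherical (and then, being a locally free sheaf, satisfies $\mathrm{Hom}(A,\mathcal{O}_p)\neq 0$ for \emph{every} $p$, which is why all skyscrapers degenerate simultaneously in cases (1) and (2)), or $A$ moves in a positive-dimensional family and is supported on a curve. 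Second, the assertion that ``by tensoring with a line bundle and shifting one may arrange $A$ to be a spherical vector bundle'' is false: tensoring changes the Mukai vector and cannot turn a non-locally-free object into a bundle. That the rigid factor is a vector bundle requires first showing that stable factors of $\mathcal{O}_p$ at points of $\overline{U(X)}$ are shifts of sheaves (using that the heart near $U(X)$ is a tilt of $\mathrm{Coh}(X)$), and then that a rigid torsion-free sheaf on a K3 surface is locally free. Finally, in case (3) you acknowledge but do not close the essential steps: the Mukai-vector identity $v(\mathcal{O}_C(k+1))-v(\mathcal{O}_C(k))=(0,0,1)$ forces $C^{2}=-2$, hence $C$ nonsingular rational, and identifying the two factors as adjacent twists of $\mathcal{O}_C$ (rather than longer filtrations or non-reduced supports) is exactly the hard part of Bridgeland's argument, not a corollary of a dimension count.
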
          
Here $T_{A}(B)$ is the Seidel-Thomas twist of B with respect to the spherical object A \cite{170}.

\subsection{Inducing stability conditions}

Let A be an Abelian surface and $X=\text{Km A}$ the associated Kummer surface. Then Proposition \ref{proposition} and Theorem \ref{bridgeland2} imply that for every $z\in i(\mathcal{P}^{+}(A))$ there is a stability condition $\sigma\in Stab^{\dagger}(X)$ with $p(\sigma)=z$. Here $i$ is the injective linear map defined in the proof of Proposition $\ref{proposition}$ and we consider the map $p:Stab^{*}(X)\longrightarrow \mathcal{N}(X)\otimes \mathbb{C}$. We observed in Theorem \ref{nw} that a four-plane defining a SCFT on a two-dimensional complex torus T with B-field $B_{T}$ and K\"ahler class $\omega$ is mapped to a four-plane defining a SCFT with B-field $B=\frac{1}{2}\pi_{*}B_{T}+\frac{1}{2}B_{\mathbb{Z}}$. $\pi_{*}\omega$ is an orbifold ample class orthogonal to the 16 classes $\left\{\hat{E}_{i}\right\}, i\in\mathbb{F}_{2}^{4}$. $\pi_{*}\omega$ is an element of the closure of the ample cone $\overline{Amp(X)}=Nef(X)$. We assume $\omega^{2}>1$. By the covering map property there is a stability condition $\sigma$ with $\pi(\sigma)=exp(B+i\pi_{*}\omega)$ on the boundary of $U(X)$. Since this stability condition lies on the boundary of $U(X)$ there must be some points $p\in X$ such that $\mathcal{O}_{p}$ is unstable with respect to $\sigma$. Every (-2) curve defines a boundary element of $U(X)$ as in the third case of Theorem $\ref{ux}$ \cite{190}. This gives
          
\begin{lem}
Let $exp(B+i\pi_{*}\omega)\in i(\mathcal{P}^{+}(A))$ be as in Proposition $\ref{proposition}$ with $\omega^{2}>1$. Then there is a stability condition $\sigma\in \partial U(X)$ with $\pi(\sigma)=exp(B+i\pi_{*}\omega)$. This $\sigma$ is an element of the codimension-one submanifolds associated to the 16 exceptional divisor classes.   
\end{lem}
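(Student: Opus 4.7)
The plan is to combine Proposition \ref{proposition} and Bridgeland's covering theorem (Theorem \ref{bridgeland2}) to lift $\exp(B+i\pi_*\omega)$ to a stability condition in $Stab^\dagger(X)$, and then use Theorem \ref{ux} to place this lift on the sixteen walls associated to the exceptional divisors. First, since $\pi$ has degree $2$ and $H^2(T,\mathbb{Z})(2)$ sits inside $H^2(X,\mathbb{Z})$, the assumption $\omega^2>1$ gives $(\pi_*\omega)^2>2$, so by Proposition \ref{proposition} the vector $\exp(B+i\pi_*\omega)$ lies in $\mathcal{P}_0^+(X)$. Surjectivity of the covering $p:Stab^\dagger(X)\to\mathcal{P}_0^+(X)$ from Theorem \ref{bridgeland2} then produces some $\sigma\in Stab^\dagger(X)$ with $p(\sigma)=\exp(B+i\pi_*\omega)$.

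Next, I would argue $\sigma\in\partial U(X)$. The crucial geometric observation is that $\pi_*\omega\in K\otimes\mathbb{R}$ with $K=\pi_*H^2(T,\mathbb{Z})$ orthogonal to the Kummer lattice $\Pi$; in particular $\pi_*\omega\cdot E_i=0$ for every $i\in\mathbb{F}_2^4$, and hence $\pi_*\omega$ lies in $\overline{Amp(X)}\setminus Amp(X)$. Recalling that $U(X)=V(X)\cdot\widetilde{GL^+(2,\mathbb{R})}$ and that $p$ restricts to a homeomorphism $V(X)\to\mathcal{L}(X)$ onto vectors $\exp(B+i\omega_0)$ with $\omega_0$ \emph{ample}, the vector $\exp(B+i\pi_*\omega)$ lies on $\partial\mathcal{L}(X)$ rather than in its interior, so $\sigma$ lies on the topological boundary of $U(X)$ inside $Stab^\dagger(X)$.

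Finally, I would identify the specific codimension-one submanifolds through $\sigma$. Each exceptional $E_i$ is a smooth rational $(-2)$-curve in $X$, and by the cited result each such curve produces a codimension-one submanifold $W_i\subset\partial U(X)$ of the third type in Theorem \ref{ux}, over which the skyscrapers $\mathcal{O}_p$ with $p\in E_i$ destabilize into twists of $\mathcal{O}_{E_i}$. Membership $\sigma\in W_i$ is governed by the phase alignment of $Z_\sigma(\mathcal{O}_{E_i}(k))$ with $Z_\sigma(\mathcal{O}_p)$, which at $p(\sigma)=\exp(B+i\pi_*\omega)$ reduces to the orthogonality $\pi_*\omega\cdot E_i=0$. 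Since this holds simultaneously for all sixteen $E_i$, one concludes $\sigma\in\bigcap_{i\in\mathbb{F}_2^4} W_i$. The principal obstacle is precisely this last identification: one must verify that the defining equation of $W_i$ inside $\mathcal{P}_0^+(X)$ really is the orthogonality condition $\pi_*\omega\cdot E_i=0$. Granting the reference [190] that each $(-2)$-curve furnishes a third-case wall, the remaining verification is lattice-theoretic and follows from the orthogonality $K\perp\Pi$ built into the embedding of Lemma \ref{lattice}.
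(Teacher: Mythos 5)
Your argument follows the paper's own proof essentially step for step: lift $\exp(B+i\pi_{*}\omega)$ using Proposition~\ref{proposition} together with the covering map of Theorem~\ref{bridgeland2}, use the orthogonality of $\pi_{*}\omega$ to the sixteen exceptional classes to see that the image point lies on the boundary of $\mathcal{L}(X)$ under the identification $V(X)\cong\mathcal{L}(X)$ (hence $\sigma\in\partial U(X)$), and invoke the third case of Theorem~\ref{ux} together with \cite{190} to identify the sixteen walls. The only minor slip is the role you assign to $\omega^{2}>1$: membership of $\exp(B+i\pi_{*}\omega)$ in $\mathcal{P}^{+}_{0}(X)$ follows from Proposition~\ref{proposition} unconditionally, whereas the hypothesis is there to give $(\pi_{*}\omega)^{2}>2$ so that, via Proposition~\ref{example}, nearby ample classes yield genuine stability conditions in $V(X)$ and the chosen lift really lies in the closure of $U(X)$.
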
   

The covering $p:Stab^{\dagger}(X)\rightarrow \mathcal{P}^{+}_{0}(X))$ is normal ~\cite{90}. 

\begin{prop}
There is an injective map from the group of deck transformations of $Stab^{\dagger}(A)$ to the group of deck transformations of $Stab^{\dagger}(X)$.
\end{prop}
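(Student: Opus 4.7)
The plan is to apply covering space theory to the embedding $i:\mathcal{P}^{+}(A)\hookrightarrow \mathcal{P}_{0}^{+}(X)$ from Proposition \ref{proposition}. First, I would recall the two deck groups in play: by the theorem preceding this proposition, $p_A:Stab^{\dagger}(A)\to \mathcal{P}^{+}(A)$ is the universal cover, so its deck group is $\pi_{1}(\mathcal{P}^{+}(A))\cong \mathbb{Z}$, generated by the loop of the $\mathbb{C}^{*}$-action, and on the $D^{b}(A)$ side it is generated by the double shift $[2]_A$. By Theorem \ref{bridgeland2}, $p_X:Stab^{\dagger}(X)\to \mathcal{P}_0^{+}(X)$ is a normal cover with deck group $Aut^{\dagger}_{0}(D^{b}(X))$, which by covering space theory is identified with $\pi_{1}(\mathcal{P}_{0}^{+}(X))/(p_X)_{\ast}\pi_{1}(Stab^{\dagger}(X))$.

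The embedding $i$ induces $i_{\ast}:\pi_{1}(\mathcal{P}^{+}(A))\to \pi_{1}(\mathcal{P}_{0}^{+}(X))$; composing with the quotient map to the deck group produces the candidate homomorphism $\Phi:\pi_{1}(\mathcal{P}^{+}(A))\to Aut^{\dagger}_{0}(D^{b}(X))$. Next I would identify $\Phi$ on the generator. Since $i$ is the $\mathbb{C}$-linear extension of the $\mathbb{R}$-linear embedding from Lemma \ref{lattice}, it intertwines scalar multiplication and hence commutes with the $\mathbb{C}^{*}$-actions on both sides. The image of the generating $\mathbb{C}^{*}$-loop stays inside $\mathcal{P}_{0}^{+}(X)$ throughout, because both the positive-definite two-plane condition defining $\mathcal{P}^{+}(X)$ and each hyperplane $\delta^{\perp}$ are $\mathbb{C}^{*}$-invariant. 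Thus $i_{\ast}$ sends the generator to the corresponding $\mathbb{C}^{*}$-loop in $\mathcal{P}_{0}^{+}(X)$.

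The key step is to identify the deck transformation of $Stab^{\dagger}(X)$ that this loop induces. Using the $\widetilde{GL^{+}(2,\mathbb{R})}$-action on $Stab^{\dagger}(X)$ recalled in the previous subsection, the subgroup $\mathbb{C}\subset \widetilde{GL^{+}(2,\mathbb{R})}$ acts on $(Z,\mathcal{P})$ by $Z\mapsto e^{-i\pi\lambda}Z$ and $\mathcal{P}(\phi)\mapsto \mathcal{P}(\phi+\mathrm{Re}\,\lambda)$, which projects under $p_X$ to the $\mathbb{C}^{*}$-action $Z\mapsto e^{-i\pi\lambda}Z$ on $\mathcal{N}(X)\otimes \mathbb{C}$. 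Running $\lambda$ from $0$ to $2$ traces out one full generator of $\pi_{1}(\mathcal{P}_{0}^{+}(X))$ downstairs while realising the double shift $[2]_X$ upstairs. Therefore $\Phi(1)=[2]_X$, an element that lies in $Aut^{\dagger}_{0}(D^{b}(X))$ because the shift acts as $-1$ on Mukai vectors (so $[2]$ is cohomologically trivial) and preserves $Stab^{\dagger}(X)$.

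Finally, $\Phi$ is injective because $[2]_X^{n}=[2n]_X$ is the identity auto-equivalence only for $n=0$, the shift having infinite order in $Aut(D^{b}(X))$. This yields the required injective map from the deck group of $Stab^{\dagger}(A)$ into that of $Stab^{\dagger}(X)$. I expect the third step --- verifying that the lift of the $\mathbb{C}^{*}$-loop along $p_X$ is precisely the orbit of the shift $[2]_X$ via the $\widetilde{GL^{+}(2,\mathbb{R})}$-action --- to be the main obstacle, since it requires careful bookkeeping of the covering $\widetilde{GL^{+}(2,\mathbb{R})}\to GL^{+}(2,\mathbb{R})$ and its compatibility with $p_A$ and $p_X$.
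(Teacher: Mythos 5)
Your proposal is correct and follows essentially the same route as the paper: both pass through the induced map $i_{\ast}:\pi_{1}(\mathcal{P}^{+}(A))\to\pi_{1}(\mathcal{P}^{+}_{0}(X))$ (injective because $\pi_{\ast}$ commutes with the $GL^{+}(2,\mathbb{R})$-action) together with the normality of the covering $p:Stab^{\dagger}(X)\to\mathcal{P}^{+}_{0}(X)$. Your version is in fact more explicit at the decisive point: where the paper merely asserts that only the trivial subgroup of $\pi_{1}(\mathcal{P}^{+}(A))$ lands in $p_{\ast}\pi_{1}(Stab^{\dagger}(X))$, you identify the image of the generating $\mathbb{C}^{*}$-loop with the double shift $[2]_{X}$ and conclude injectivity from its infinite order in $Aut^{\dagger}_{0}(D^{b}(X))$, which supplies the justification the paper leaves implicit.
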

\begin{proof}
The fundamental group $\pi_{1}(\mathcal{P}^{+}(A))\cong \pi_{1}(GL^{+}(2,\mathbb{R}))=\mathbb{Z}$ is a free cyclic group generated by the loop coming from the $\mathbb{C}^{*}$ action on $\mathcal{P}^{+}(A)$. This is represented by a rotation matrix in $GL^{+}(2,\mathbb{R})$. We choose base points $l,l'$ and $\sigma\in Stab^{\dagger}(X)$ with $p(\sigma)=l'$. The induced map 
\begin{eqnarray}
\pi_{1}(\mathcal{P}^{+}(A),l) \longrightarrow \pi_{1}(\mathcal{P}^{+}_{0}(X),l') \nonumber
\end{eqnarray}
is injective since the map $\pi_{*}$ and the action of $GL^{+}(2,\mathbb{R})$ commute. The trivial element of $\pi_{1}( \mathcal{P}^{+}(A),l)$ is the only normal subgroup mapped to the normal subgroup $p_{*}(\pi_{1}(Stab^{\dagger}(X),\sigma)$ of $\pi_{1}(\mathcal{P}^{+}_{0}(X),l')$. 
\end{proof}

From the discussion of the SCFT side of the story we expect that there is an embedding of the connected component $Stab^{\dagger}(A)$ into the
distinguished connected component $Stab^{\dagger}(X)$.

\begin{thm}
\label{meinprop1}
Let $Stab^{\dagger}(A)$ be the (unique) maximal connected component of the space of stability conditions of an Abelian surface A and $Stab^{\dagger}(X)$ the
distinguished connected component of Stab(X) of the Kummer surface X=Km A. Then every connected component of $p^{-1}(i(\mathcal{P}^{+}(A)))$ is homeomorphic to $Stab^{\dagger}(A)$.\\   
\end{thm}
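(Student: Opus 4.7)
The plan is to show that each connected component of $p^{-1}(i(\mathcal{P}^{+}(A)))$ is simply connected and hence, as a covering of $\mathcal{P}^{+}(A)$, is homeomorphic to the universal cover of $\mathcal{P}^{+}(A)$, which by Bridgeland's theorem is $Stab^{\dagger}(A)$. First I would restrict the covering $p : Stab^{\dagger}(X) \to \mathcal{P}_{0}^{+}(X)$ above the subspace $i(\mathcal{P}^{+}(A))$. Since the pullback of a covering to any subspace of its base is again a covering, and since all spaces involved are locally path-connected complex manifolds, every connected component $C$ of $p^{-1}(i(\mathcal{P}^{+}(A)))$ defines a covering $p|_{C} : C \to i(\mathcal{P}^{+}(A))$. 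Fixing a basepoint $c_{0} \in C$, setting $y_{0} = p(c_{0})$, and identifying $\pi_{1}(i(\mathcal{P}^{+}(A)), y_{0})$ with $\pi_{1}(\mathcal{P}^{+}(A)) \cong \mathbb{Z}$ via $i$, the cover $p|_{C}$ is classified by the subgroup $H := (p|_{C})_{*}\pi_{1}(C, c_{0}) \leq \mathbb{Z}$.

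Next I would identify $H$ using path-lifting. A loop $\gamma$ in $i(\mathcal{P}^{+}(A))$ based at $y_{0}$ lifts to a loop in $C$ starting at $c_{0}$ precisely when its unique lift to $Stab^{\dagger}(X)$ starting at $c_{0}$ closes up. But that lift is a continuous path into $p^{-1}(i(\mathcal{P}^{+}(A)))$ beginning in the component $C$, so it remains inside $C$; therefore $\gamma$ lifts to a loop in $C$ if and only if it lifts to a loop in $Stab^{\dagger}(X)$, and hence
\[
H = i_{*}^{-1}\bigl(p_{*}\pi_{1}(Stab^{\dagger}(X), c_{0})\bigr) \leq \pi_{1}(\mathcal{P}^{+}(A)).
\]
By the content of the proposition immediately preceding this theorem, the induced map $i_{*}$ is injective and the trivial subgroup is the only subgroup of $\pi_{1}(\mathcal{P}^{+}(A)) \cong \mathbb{Z}$ whose image under $i_{*}$ lies inside $p_{*}\pi_{1}(Stab^{\dagger}(X))$. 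Hence $H$ is trivial.

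Because $(p|_{C})_{*}$ is injective on fundamental groups, triviality of $H$ forces $\pi_{1}(C, c_{0}) = 0$. Thus $C \to \mathcal{P}^{+}(A)$ is a simply connected covering, and by the Bridgeland theorem recalled in the previous subsection this universal cover is $Stab^{\dagger}(A)$; uniqueness of the universal cover up to homeomorphism then yields $C \cong Stab^{\dagger}(A)$, as desired. The main obstacle I foresee is the bookkeeping in the second paragraph: one must verify both that the path-lift of $\gamma$ to $Stab^{\dagger}(X)$ truly remains in the chosen component $C$ (which rests on local path-connectedness of the preimage) and that the resulting characterization of $H$ as $i_{*}^{-1}(p_{*}\pi_{1}(Stab^{\dagger}(X)))$ is independent of the choice of basepoint $c_{0} \in C$.
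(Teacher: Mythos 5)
Your argument is correct and follows essentially the same route as the paper: restrict the covering $p$ to $i(\mathcal{P}^{+}(A))$, show each path component is a simply connected cover of $\mathcal{P}^{+}(A)\cong i(\mathcal{P}^{+}(A))$, and identify it with the universal cover $Stab^{\dagger}(A)$ from Bridgeland's theorem. The only cosmetic difference is that you derive triviality of the classifying subgroup from the preceding proposition, while the paper cites the same fact directly in the form that the generator of $\pi_{1}(i(\mathcal{P}^{+}(A)))$ lifts to the double shift $[2]$ and hence never closes up.
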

\begin{proof}
Since we have a homeomorphism $i(\mathcal{P}^{+}(A))\cong \mathcal{P}^{+}(A)$ the fundamental group $\pi_{1}(i(\mathcal{P}^{+}(A)))=\mathbb{Z}$ is also a free cyclic group. Note that $\mathcal{P}^{+}(A)$ is path connected and locally path connected. We consider a path component of the covering space $p^{-1}(i(\mathcal{P}^{+}(A)))$ which is again a covering space. Since the generator of $\pi_{1}(i(\mathcal{P}^{+}(A)))$ lifts to the double shift functor [2] a path connected component of this covering space is simply connected and is thus isomorphic to $Stab^{\dagger}(A)$.
\end{proof}

Note that deck transformations except double shifts exchange the components of $p^{-1}(i(\mathcal{P}^{+}(A)))$. Theorem $\ref{meinprop1}$ defines embeddings $Stab^{\dagger}(A)\hookrightarrow Stab^{\dagger}(X)$. In fact, we get one embedding up to deck transformations by the uniqueness of lifts. We construct this embedding topologically. A functor embedding $Stab^{\dagger}(A)$ into $Stab^{\dagger}(X)$ was described in ~\cite{220}.

\begin{rem}
For a twisted Abelian surface $(A,\alpha_{B_{A}})$ and the twisted Kummer surface $(\mbox{Km A},\alpha_{B})$ with B-field lifts as in Lemma $\ref{meinlemma}$ a similar statement to Theorem $\ref{meinprop1}$ holds true. 
\end{rem}

\section*{Acknowledgements}
It is a pleasure to thank my adviser Katrin Wendland for generous support. I thank Heinrich Hartmann, Daniel Huybrechts and Emanuele Macr\`i for helpful discussions and correspondences. I am grateful to the organizers of the programme on moduli spaces 2011 at the Isaac Newton Institute in Cambridge. In particular, I thank Professor Richard Thomas from Imperial College in London for support. This research was partially supported by the ERC Starting Independent Researcher Grant StG No. 204757-TQFT (Katrin Wendland, PI).

\end{document}